\documentclass[12pt]{amsart}
%%%%%%%%%%%%%%%%%%%%%%%%%%%%%%%%%%%%%%%%%%%%%%%%%%%%%%%%%%%%%%%%%%%%%%%%%%%%%%%%
%% Packages
%%%%%%%%%%%%%%%%%%%%%%%%%%%%%%%%%%%%%%%%%%%%%%%%%%%%%%%%%%%%%%%%%%%%%%%%%%%%%%%%
\usepackage[margin=2.55cm]{geometry}
\usepackage{amscd}      
\usepackage{amssymb}
\usepackage{hyperref}
\usepackage{amsmath, amsthm, graphics,epic,eepic}
\usepackage{cite}
\usepackage{xypic}     
\LaTeXdiagrams    
\usepackage[all]{xy}
\xyoption{2cell} \UseAllTwocells \xyoption{frame} \CompileMatrices
\usepackage{latexsym}
\usepackage{epsfig}
\usepackage{amsfonts}
\usepackage{enumerate}
\usepackage{times}

%%%%%%%%%%%%%%%%%%%%%%%%%%%%%%%%%%%%%%%%%%%%%%%%%%%%%%%%%%%%%%%%%%%%%%%%%%%%%%%%
%% Layout
%%%%%%%%%%%%%%%%%%%%%%%%%%%%%%%%%%%%%%%%%%%%%%%%%%%%%%%%%%%%%%%%%%%%%%%%%%%%%%%%
\allowdisplaybreaks[3]
\raggedbottom
\setlength{\parskip}{5pt}
\setlength{\parskip}{5pt}

%%%%%%%%%%%%%%%%%%%%%%%%%%%%%%%%%%%%%%%%%%%%%%%%%%%%%%%%%%%%%%%%%%%%%%%%%%%%%%%%
%% Theorem environments
%%%%%%%%%%%%%%%%%%%%%%%%%%%%%%%%%%%%%%%%%%%%%%%%%%%%%%%%%%%%%%%%%%%%%%%%%%%%%%%%
\newtheorem{prop}{Proposition}
\newtheorem{lem}[prop]{Lemma}
\newtheorem{cor}[prop]{Corollary}
\newtheorem{thm}[prop]{Theorem}
\theoremstyle{definition}
\newtheorem{defn}[prop]{Definition}
\newtheorem{rmk}[prop]{Remark}
\newtheorem{notation}[prop]{Notation}

\newcommand{\bpsi}{\bar{\psi}}

\newcommand{\tQ}{\widetilde{Q}}

\newcommand{\bSigma}{\mathbf{\Sigma}} 

\newcommand{\cE}{\mathcal{E}}
\newcommand{\cV}{\mathcal{V}}

\newcommand{\X}{\mathcal{X}}
\newcommand{\Y}{\mathcal{Y}}

\newcommand{\sL}{\mathcal{L}}

\newcommand{\bc}{\mathbf{c}}
\newcommand{\be}{\mathbf{e}}

\newcommand{\sH}{\mathcal{H}}
\newcommand{\cO}{\mathcal{O}}

\newcommand{\fun}{\mathbf{1}}

\def\<{\left\langle}
\def\>{\right\rangle}
%%%%%%%%%%%%%
%%%%%%%%%%%%%

\DeclareMathOperator{\sech}{sech}

\DeclareMathOperator{\Ch}{CH}
\let\Box=\relax
\DeclareMathOperator{\Box}{Box}

\DeclareMathOperator{\ord}{ord}
\DeclareMathOperator{\NE}{NE}
\DeclareMathOperator{\Pic}{Pic}

\DeclareMathOperator{\Cone}{Cone} 
\DeclareMathOperator{\Sym}{Sym} 
 
\DeclareMathOperator{\age}{age} 
 
\DeclareMathOperator{\ch}{ch}

\newcommand{\cC}{\mathcal{C}}

\newcommand{\CR}{\text{\rm CR}}

\newcommand{\tor}{\text{\rm tor}}
\newcommand{\tw}{\text{\rm tw}}
\newcommand{\non}{\text{\rm non}}

\newcommand{\Cstar}{\mathbb{C}^\times}

\newcommand{\CC}{\mathbb{C}}
\newcommand{\PP}{\mathbb{P}}
\newcommand{\QQ}{\mathbb{Q}}
\newcommand{\RR}{\mathbb{R}}
\newcommand{\ZZ}{\mathbb{Z}}
\newcommand{\TT}{\mathbb{T}}
\newcommand{\LL}{\mathbb{L}}
\newcommand{\NN}{\mathbb{N}}

\newcommand{\Novikov}{\boldsymbol{\Lambda}} 

\pagestyle{headings}
\begin{document}

\title{Some Applications of the Mirror Theorem for Toric Stacks}
\author[Coates]{Tom Coates}
\author[Corti]{Alessio Corti}
\address{Department of Mathematics\\ Imperial College London\\ 
180 Queen's Gate\\ London SW7 2AZ\\ United Kingdom}
\email{t.coates@imperial.ac.uk}
\email{a.corti@imperial.ac.uk}

\author[Iritani]{Hiroshi Iritani}
\address{Department of Mathematics\\ Graduate School of Science\\ 
Kyoto University\\ Kitashirakawa-Oiwake-cho\\ Sakyo-ku\\ 
Kyoto\\ 606-8502\\ Japan}
\email{iritani@math.kyoto-u.ac.jp}

\author[Tseng]{Hsian-Hua Tseng}
\address{Department of Mathematics\\ Ohio State University\\ 
100 Math Tower, 231 West 18th Ave. \\ Columbus \\ OH 43210\\ USA}
\email{hhtseng@math.ohio-state.edu}

\date{\today}

\begin{abstract}
  We use the mirror theorem for toric Deligne--Mumford stacks, proved
  recently by the authors and by Cheong--Ciocan-Fontanine--Kim, to
  compute genus-zero Gromov--Witten invariants of a number of toric
  orbifolds and gerbes.  We prove a mirror theorem for a class of
  complete intersections in toric Deligne--Mumford stacks, and use
  this to compute genus-zero Gromov--Witten invariants of an orbifold
  hypersurface.
\end{abstract}

\maketitle

\section{Introduction}

Given a symplectic orbifold or Deligne--Mumford stack $\X$, one might
want to calculate the Gromov--Witten invariants of $\X$:
\[
\<
a_1\bpsi^{k_1},\ldots,a_n\bpsi^{k_n}
\>_{g,n,d}^{\X}
\]
where $a_1,\ldots,a_n$ are classes in the Chen--Ruan orbifold
cohomology of $\X$ and $k_1,\ldots,k_n$ are non-negative integers.
Gromov--Witten invariants carry information about the enumerative
geometry of $\X$: roughly speaking they count the number of orbifold
curves in $\X$, of genus $g$ and degree $d$, that pass through certain
cycles (recorded by the classes $a_i$) and satisfy certain constraints
on their complex structure.  Computing Gromov--Witten invariants is in
general hard, but one can often compute genus-zero Gromov--Witten
invariants using mirror symmetry.  A mirror theorem for toric
Deligne--Mumford stacks was proved recently by the authors
\cite{ccit2} and, independently, by Cheong--Ciocan-Fontanine--Kim
\cite{ccfk}.  In what follows we give various applications of this
mirror theorem.  We compute genus-zero Gromov--Witten invariants of a
number of toric Deligne--Mumford stacks; prove a mirror theorem
(Theorem \ref{thm:ci_mirror}) 
for certain complete intersections in
toric Deligne--Mumford stacks; and use this to compute genus-zero
Gromov--Witten invariants of an orbifold hypersurface.   Along the way
we make a technical point that may be useful elsewhere: showing that
one can apply Coates--Givental/Tseng-style hypergeometric
modifications to $I$-functions, rather than just to $J$-functions
(Theorem~\ref{thm:twisted_I_is_on_the_twisted_cone}).  The mirror theorem proved here (Theorem~\ref{thm:ci_mirror}) is new, and contains all known mirror theorems for toric complete intersections as special cases (see e.g.~\cite{ccit,Givental:toric}).  It plays a key role in the recent proof of the Crepant Transformation Conjecture for  complete intersections in toric Deligne--Mumford stacks~\cite{CIJ}. 

This paper is written with two purposes in mind.  It provides a
reasonably self-contained guide that should help the reader to apply
our mirror theorems to new examples.  It also increases the number of
explicit, non-trivial calculations of orbifold Gromov--Witten
invariants in the literature.  Orbifold Gromov--Witten theory is
fraught with technical subtleties, and we hope that our calculations
will be useful for others, as test examples for more sophisticated
theories.  The examples also demonstrate a practical advantage of our
mirror theorems over existing methods \cite{cclt,Woodward:1,Woodward:2,Woodward:3,gw1}, in
that they often allow the direct determination of genus-zero
Gromov--Witten invariants with insertions from twisted sectors,
without needing to resort to the WDVV equation or reconstruction
theorems \cite{Iritani:QDM,Rose}.

Let $\X$ be an algebraic Deligne--Mumford stack equipped with the
action of a (possibly-trivial) torus $\TT$.  Suppose that $\X$ is
sufficiently nice that one can define $\TT$-equivariant Gromov--Witten
invariants; this is the case, for example, if $\X$ is smooth as a
stack and the coarse moduli space $X$ of $\X$ is semi-projective
(projective over affine).  Let $H^\bullet_{\CR, \TT}(\X)$ denote the
$\TT$-equivariant Chen--Ruan cohomology of $\X$ (see \S\ref{sec:CR}).
Let $\Novikov(R)$ denote the Novikov ring of $\X$; this is a completion of
the group ring $R[H_2(\X;\ZZ) \cap \NE(\X)]$ of the semigroup 
$H_2(\X;\ZZ) \cap \NE(\X)$ 
generated by classes of effective curves. 
  Following Givental \cite{gi},
Tseng has defined a symplectic structure on:
\[
\sH := H^\bullet_{\CR, \TT}(\X) \otimes_{H_{\TT}^\bullet({\rm pt})} 
\Novikov\left(H_{\TT}^\bullet({\rm pt})(\!(z^{-1})\!)\right)
\]
and a Lagrangian submanifold $\sL$ of $\sH$ that encodes all
genus-zero Gromov--Witten invariants of $\X$ \cite{ts}.  We will not
give a precise definition of $\sL$ in this paper, referring the reader
to \cite[\S2]{ccit2} for a detailed discussion.  For us, what will be
important is that $\sL$ determines and is determined by Givental's
$J$-function:
\begin{equation}
  \label{eq:J_function}
  J_\X(t,z) = z + t + \sum_{d \in H_2(X;\ZZ)} \sum_{n=0}^\infty
  \sum_{k=0}^\infty \sum_\alpha
  \frac{Q^d}{n!}
  \<t,t,\ldots,t,\phi_\alpha \psi^k\>^\X_{0,n+1,d}
  \phi^\alpha z^{-k-1}
\end{equation}
where $t \in H^\bullet_{\CR, \TT}(\X)$; $z$ is a formal variable;
$Q^d$ is the representative of $d$ in the Novikov ring~$\Novikov$; the
correlator denotes a Gromov--Witten invariant, exactly as in
\cite[\S2]{ccit2}; and $\{\phi_\alpha\}$, $\{\phi^\alpha\}$ denote
bases for $H^\bullet_{\CR, \TT}(\X)$ which are dual with respect to
the pairing on Chen--Ruan cohomology.  The submanifold $\sL$
determines the $J$-function because $J_\X(t,-z)$ is the unique point
on $\sL$ of the form ${-z} + t + O(z^{-1})$, where $O(z^{-1})$ is a
power series in $z^{-1}$.  The $J$-function determines $\sL$ because
it determines all genus-zero Gromov--Witten invariants of $\X$ with
descendant insertions at one or fewer marked points; it thus
determines all genus-zero invariants with descendant insertions at two
marked points via \cite[Proposition~2.1]{GT}, and determines all other
genus-zero invariants via the Topological Recursion Relations
\cite[\S2.5.7]{ts}. To determine the genus-zero Gromov--Witten
invariants of $\X$, therefore, it suffices to determine the
$J$-function $J_\X(t,z)$.  In \S\ref{sec:examples} and
\S\ref{sec:ci_examples} below we use mirror theorems to determine the
$J$-function of a number of Deligne--Mumford stacks $\X$.

The reader may be interested in the quantum orbifold cohomology ring
of $\X$. Recovering quantum cohomology from the $J$-function is
straightforward: general theory implies that $J_\X(t,z)$ satisfies a
system of differential equations:
\[
z
\frac{\partial}{\partial t^\alpha}
\frac{\partial}{\partial t^\beta}
J_\X(t,z) = 
\sum_\gamma {c_{\alpha \beta}}^\gamma(t) 
\frac{\partial}{\partial t^\gamma}
J_\X(t,z)
\]
where $t = \sum_\alpha t^\alpha \phi_\alpha$ and the coefficients
${c_{\alpha \beta}}^\gamma(t)$ are the structure constants of the
orbifold quantum product \cite{CR2,Barannikov:quantum,gi2}.  Thus:
\[
z
\frac{\partial}{\partial t^\alpha}
\frac{\partial}{\partial t^\beta}
J_\X(t,z) =
\phi_\alpha \star_t \phi_\beta + O(z^{-1})
\]
where $\star_t$ denotes the big orbifold quantum product with
parameter $t \in H^\bullet_{\CR,\TT}(\X)$.

\subsection*{Acknowledgments}

We thank Ionut Ciocan-Fontanine for useful discussions.  T.C.~was
supported in part by a Royal Society University Research Fellowship,
ERC Starting Investigator Grant number~240123, and the Leverhulme
Trust.  A.C.~was supported in part by EPSRC grants EP/E022162/1 and
EP/I008128/1.  H.I.~was supported in part by EPSRC grant EP/E022162/1
and JSPS Grant-in-Aid for Scientific Research (C)~25400069.
H.-H.T.~was supported in part by a Simons Foundation Collaboration
Grant.

\section{The Mirror Theorem for Toric Deligne--Mumford Stacks}

We assume that the reader is familiar with toric Deligne--Mumford
stacks.  A quick summary of the relevant material can be found in
\cite[\S3]{ccit2}; the theory is developed in detail in \cite{BCS,
  Iwanari1, Iwanari2, FMN, Jiang}.

\subsection{Stacky Fans}

A toric Deligne--Mumford stack is defined by a \emph{stacky fan}
$\bSigma=(N,\Sigma,\rho)$, where $N$ is a finitely generated abelian
group, $\Sigma\subset N_\QQ=N\otimes_{\ZZ}\QQ$ is a rational
simplicial fan, and $\rho \colon \ZZ^{n} \to N$ is a homomorphism with
finite cokernel such that the images of the standard basis vectors in
$\ZZ^n$ under the composition 
$\begin{CD} \ZZ^n @>{\rho}>> N @>>>N_\QQ
\end{CD}$ generate the $1$-dimensional cones of $\Sigma$.  Let
$\LL\subset \ZZ^n$ be the kernel of $\rho$. The exact sequence
\[
\begin{CD}
0 @>>> \LL @>>> \ZZ^n @>{\rho}>> N 
\end{CD} 
\]
is called the {\em fan sequence}.  Let $\rho_i \in N$ denote the image
under $\rho$ of the $i$th standard basis vector in $\ZZ^n$. Let
${\rho}^{\vee} \colon (\ZZ^*)^{n}\to \LL^{\vee} 
:= H^1(\Cone(\rho)^*)$ be the Gale dual
\cite{BCS} of $\rho$. There is an exact sequence
\begin{equation*} 
\begin{CD}
0@>>> N^*@>>>
(\ZZ^*)^n @>{\rho^\vee}>> \LL^\vee
\end{CD} 
\end{equation*}
called the {\em divisor sequence}.  The toric Deligne--Mumford stack
associated to the stacky fan $\bSigma$ admits a canonical action of
the torus $\TT := N \otimes \Cstar$.

\subsection{Chen--Ruan Cohomology}
\label{sec:CR}
Let $\X$ denote the toric Deligne--Mumford stack defined by the stacky
fan $\bSigma = (N,\Sigma,\rho)$.  Let $N_{\tor}$ denote the torsion
subgroup of $N$, let $\overline{N} := N/N_\tor$, and let
$\overline{c}\in \overline{N}$ denote the image of $c \in N$ under the
canonical projection $N\to \overline{N}$.  The \emph{box} of $\bSigma$
is:
\[
\Box(\bSigma):=\left\{b\in N : \text{$\exists \sigma \in \Sigma$ such
    that $\bar{b}=\sum_{i : \bar{\rho}_i \in \sigma}
  a_i\bar{\rho}_i$ for some $a_i$ with $0\leq a_i< 1$} \right\}
\]
Components of the inertia stack $I \X$ are indexed by elements of
$\Box(\bSigma)$, and we write $I\X_b$ for the component of inertia
corresponding to $b\in \Box$.

The $\TT$-equivariant Chen--Ruan
orbifold cohomology \cite{CR1,ccliu} of $\X$ is:
\[
H^\bullet_{\CR, \TT}(\X) := H^\bullet_\TT(I\X)
\]
with a grading and product defined as follows.  Let
$R_{\TT}:=\Sym^\bullet_\CC(N^*\otimes \CC) =
H_{\TT}^{2\bullet}(\text{pt})$, noting that elements of $H^{2k}({\rm
  pt})$ are taken to have degree~$k$. As an $R_{\TT}$-module, we have:
\begin{equation}
  \label{eq:HCRX}
  H^\bullet_{\CR, \TT}(\X) \cong
  \frac{R_{\TT}[N]}{
    \left\{\chi - \sum_{i=1}^n \chi(\rho_i)y^{\rho_i} : 
      \chi \in N^* \otimes \CC \cong H^2_{\TT}({\rm pt}) \right \}}
\end{equation}
This is a graded ring with respect to the Chen--Ruan orbifold cup
product \cite{BCS,JT,ccliu}; here if $b \in N$ is such that
$\bar{b}=\sum_{\bar{\rho}_i \in \sigma} m_i \bar{\rho}_i$ where
$\sigma$ is the minimal cone in containing $\bar{b}$, then $y^b$ has
degree $\sum_{\bar{\rho}_i \in \sigma} m_i$.  The degree of $y^b$ is
known as the \emph{age} of $b$.  For $b\in \Box(\bSigma)$, the unit
class supported on the component $I\X(\bSigma)_b$ of the inertia stack
corresponds under \eqref{eq:HCRX} to $y^b$.  The fact that $I\X_0 =
\X$ gives a canonical inclusion $H^\bullet_\TT(\X;\CC) \subset
H^\bullet_{\CR, \TT}(\X)$, and the class $u_i \in H^2_\TT(\X)$ given
by the $\TT$-equivariant Poincar\'e-dual to the $i$th toric divisor in
$\X$ corresponds under \eqref{eq:HCRX} to $y^{\rho_i}$.

\subsection{Extended Stacky Fans}
\label{sec:extended_stacky_fans}
Let $\bSigma=(N,\Sigma,\rho)$ be a stacky fan, write $N_\Sigma := \{
c\in N : \bar{c} \in |\Sigma|\}$, and let $S$ be a finite set equipped
with a map $S \to N_\Sigma$.  We label the finite set $S$ by
$\{1,\dots,m\}$, where $m=|S|$, and write $s_j\in N$ for the image of
the $j$th element of $S$.  The {\em $S$-extended stacky fan} is
$(N,\Sigma,\rho^S)$ where $\rho^S\colon\ZZ^{n+m}\to N$ is defined by:
\begin{equation*}
\rho^S(e_i) = 
\begin{cases}
  \rho_i &  1\leq i\leq n \\
  s_{i-n} & n <  i \leq n+m
\end{cases}
\end{equation*}
and $e_i$ denotes the $i$th standard basis vector for $\ZZ^n$.  This
gives an $S$-extended fan sequence 
\[
\begin{CD}
0 @>>> \LL^S @>>> \ZZ^{n+m} 
@>{\rho^{S}}>> N
\end{CD}
\]
and, by Gale duality, an $S$-extended divisor sequence:
\[
\begin{CD}
0 @>>> N^* @>>> (\ZZ^*)^{n+m} 
@>{\rho^{S\vee}}>> \LL^{S\vee}
\end{CD}
\]
The toric Deligne--Mumford stacks associated to the stacky fan $(N,
\Sigma, \rho)$ and the $S$-extended stacky fan $(N, \Sigma, \rho^S)$
are canonically isomorphic \cite{Jiang}.

\subsection{Extended Degrees for Toric Stacks}

Consider an $S$-extended stacky fan $\bSigma$ as in
\S\ref{sec:extended_stacky_fans}, and let $\X$ be the corresponding
toric Deligne--Mumford stack.  The inclusion $\ZZ^n \to \ZZ^{n+m}$ of
the first~$n$ factors induces an exact sequence: 
\[
\begin{CD} 
0 @>>> \LL @>>> \LL^S @>>> \ZZ^m 
\end{CD}
\]
This splits over $\QQ$, via the map $\mu \colon \QQ^m \to \LL^S
\otimes \QQ$ that sends the $j$th standard basis vector to
\begin{equation} 
\label{eq:splitting_vector}
e_{j+n} - \sum_{i : \bar{\rho}_i \in \sigma(j)} s_{ji} e_i  \in \LL^S\otimes \QQ 
\subset \QQ^{n+m} 
\end{equation} 
where $\sigma(j)$ is the minimal cone containing $\bar{s}_j$ and the
positive numbers $s_{ji}$ are determined by $\sum_{i: \bar{\rho}_i \in \sigma(j)}
s_{ji} \bar{\rho}_i = \bar{s}_j$. Thus we obtain an isomorphism:
\begin{equation} 
\label{eq:LS_splitting} 
\LL^S \otimes \QQ   \cong (\LL\otimes \QQ)  \oplus \QQ^m 
\end{equation} 
Recall that $\Pic(\X) \cong \LL^\vee$, and hence that the Mori cone
$\NE(\X)$ is a subset of $\LL\otimes \RR$.  The \emph{$S$-extended
Mori cone} is the subset of $\LL^S \otimes \RR$ given by:
\begin{align*}
  \NE^S(\X) = \NE(\X) \times (\RR_{\ge 0})^m 
  && \text{via \eqref{eq:LS_splitting}.}
\end{align*}
The $S$-extended Mori cone can be thought of as the cone spanned by
the ``extended degrees'' of certain orbifold stable maps $f \colon \cC
\to \X$: see \cite[\S4]{ccit2}.

\begin{notation}
  We denote the fractional part of $x$ by  $\langle x \rangle$.
\end{notation}

\begin{defn}
  Recall that $\LL^S \subset \ZZ^{n+m}$, where $m= |S|$.  For a cone
  $\sigma \in \Sigma$, denote by $\Lambda_\sigma^S\subset \LL^S
  \otimes \QQ$ the subset consisting of elements
  \[
  \lambda=\sum_{i=1}^{n+m}\lambda_i e_i
  \]
  such that $\lambda_{n+j}\in \ZZ, 1\leq j\leq m$, and $\lambda_i\in
  \ZZ$ if $\bar{\rho}_i \notin \sigma$ and $i \leq n$. Set
  $\Lambda^S:=\bigcup_{\sigma\in \Sigma} \Lambda^S_{\sigma}$.
\end{defn}

\begin{defn} 
  The \emph{reduction function} is
  \begin{equation*}
    \begin{aligned}
      v^S \colon \Lambda^S & \longrightarrow  \Box(\bSigma) \\
      \lambda & \longmapsto \sum_{i=1}^n
\lceil\lambda_i\rceil \rho_i+\sum_{j=1}^m\lceil \lambda_{n+j}\rceil s_j
    \end{aligned}
  \end{equation*} 
  The reduction function takes values in $\Box(\bSigma)$: for $\lambda
  \in \Lambda^S_\sigma$ we have $\overline{v^S(\lambda)} =
  \sum_{i=1}^n \langle -\lambda_i \rangle \bar{\rho}_i \in
  \sigma$. %Note that $v^S(\lambda)_i = \langle -\lambda_i \rangle$.  
\end{defn}

\begin{defn}
  For a box element $b\in \Box(\bSigma)$, we set:
  \[
  \Lambda^S_b := \{\lambda \in \Lambda^S: v^S(\lambda) = b\} 
  \]
  and define:
  \begin{align*}
    \Lambda E^S & := \Lambda^S \cap \NE^S(\X) &
    \Lambda E^S_b & := \Lambda^S_b \cap \NE^S(\X) 
  \end{align*}
\end{defn}

\begin{notation}
  \label{notation:Qtilde}
  Recall that $Q^d$ denotes the representative of $d \in H_2(X;\ZZ)$
  in the Novikov ring $\Novikov$.  Given $\lambda \in \Lambda E^S$
  write $\lambda = (d, k)$ via \eqref{eq:LS_splitting}, so that $d \in
  \NE(\X) \cap H_2(X,\ZZ)$ and $k\in (\ZZ_{\ge 0})^m$.  We set:
  \[
  \tQ^\lambda = Q^d x^k = Q^d x_1^{k_1} \cdots x_m^{k_m} 
  \in \Novikov[\![x_1,\ldots,x_m]\!] 
  \]
\end{notation}

\subsection{Mirror Theorem} Once again, consider an $S$-extended
stacky fan $\bSigma$ as in \S\ref{sec:extended_stacky_fans}.  Let $\X$
be the corresponding toric Deligne--Mumford stack.

\begin{defn}
  The {\em $S$-extended $\TT$-equivariant $I$-function} of $\X$ is:
  \begin{equation*}
    I^S(t,x,z):=
    ze^{\sum_{i=1}^n u_i t_i/z}
    \sum_{b\in \Box(\bSigma)}
    \sum_{\lambda\in \Lambda E^S_b} 
    \tQ^\lambda e^{\lambda t} 
    \left(
      \prod_{i=1}^{n+m}
      \frac{\prod_{\<a\>=\< \lambda_i \>, a \leq 0}(u_i+a z)}
      {\prod_{\<a \>=\< \lambda_i \>, a\leq \lambda_i} (u_i+a z)} 
    \right) 
    y^b 
  \end{equation*}
  Here:
  \begin{itemize}
  \item $t=(t_1,\dots,t_n)$ are variables, and $e^{\lambda
      t}:=\prod_{i=1}^n e^{(u_i\cdot d) t_i}$.
  \item $x=(x_1,\ldots,x_m)$ are variables: see Notation~\ref{notation:Qtilde}.
  \item for each $\lambda \in \Lambda E^S_b$, we write $\lambda_i$ for
    the $i$th component of $\lambda$ as an element of $\QQ^{n+m}$; in
    particular $\<\lambda_i\>=0$ for $n < i\le n+m$.
  \item For $1 \leq i \leq n$, $u_i$ is the $\TT$-equivariant
    Poincar\'e dual to the $i$th toric divisor: see
    Section~\ref{sec:CR}.  For $n < i\le n+m$, $u_i$ is defined to
    be zero.
  \item $y^b$ is the unit class supported on the component of inertia
    $I\X(\bSigma)_b$ associated to $b\in \Box(\bSigma)$: see
    Section~\ref{sec:CR}.
  \end{itemize}
  The $I$-function $I^S(t,x,z)$ is a formal power series in $Q$, $x$,
  $t$ with coefficients in $H^\bullet_{\CR, \TT}(\X)(\!(z^{-1})\!)$.
\end{defn}

\begin{thm}[The mirror theorem for toric Deligne--Mumford stacks
  \cite{ccit2,ccfk}]
  \label{thm:toric_mirror_theorem}
  Let $\bSigma = (N,\Sigma,\rho)$ be a stacky fan, and let $\X$ be the
  corresponding toric Deligne--Mumford stack.  Let $S$ be a finite set
  equipped with a map to $N_\Sigma$. Suppose that 
  the coarse moduli space of $\X$ is semi-projective (projective
  over affine).  Then $I^S(t,x,{-z}) \in \sL$.
\end{thm}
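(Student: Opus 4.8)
The plan is to deduce the theorem from Givental's characterization of the Lagrangian cone $\sL$ by $\TT$-fixed point localization. Recall that membership in $\sL$ can be checked after restriction to the $\TT$-fixed points of $\X$ by means of two conditions: a \emph{recursion relation} governing the pole structure along the weight directions of a cohomology-valued series, and a \emph{polynomiality} (regularity) condition on an associated pairing. The $\TT$-fixed points of $\X$ are indexed by pairs $(\sigma,b)$ consisting of a maximal cone $\sigma\in\Sigma$ together with a box element $b\in\Box(\bSigma)$ whose image lies in $\sigma$; these are the $\TT$-fixed points of the inertia components $I\X_b$. First I would compute the restriction $I^S|_{(\sigma,b)}$ of the $I$-function to each such fixed point: since the equivariant classes $u_i$ restrict to explicit linear combinations of the weights, each restriction becomes a concrete product of the shifted hypergeometric ratios appearing in the definition of $I^S$.

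Second, I would verify the recursion relation. The relevant geometry is that of the $\TT$-invariant orbifold curves joining two fixed points: these are orbifold $\PP^1$'s with prescribed isotropy at their two special points, and virtual localization on the moduli of genus-zero orbifold stable maps whose image is such a curve produces the recursion coefficients. The content of the step is to match these coefficients---equivariant Euler classes of the tangent and normal contributions, together with the automorphism factors coming from the isotropy---against the ratio $I^S|_{(\sigma',b')}/I^S|_{(\sigma,b)}$ of the restrictions computed in the first step. Keeping track of the orbifold data (the isotropy orders, the age shift that moves between box sectors, and the $\lceil\,\cdot\,\rceil$ entering the reduction function $v^S$) is the crux, and I expect this to be the main obstacle: one must check that the twisted hypergeometric factors reproduce exactly the orbifold localization contributions, including the correct inertia sectors at the nodes.

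Third, I would establish polynomiality, showing that the natural pairing built from $I^S(t,x,-z)$ and $I^S(t,x,z)$ has no spurious poles in $z$ and is regular after summing over the fixed points; this follows from a residue analysis of the hypergeometric factors, exploiting the symmetry between the numerator and denominator products indexed by $\langle a\rangle=\langle\lambda_i\rangle$. Granting the recursion and polynomiality, Givental's characterization yields $I^S(t,x,-z)\in\sL$ at once. An alternative route, taken in \cite{ccfk}, replaces stable maps by stable quasimaps: there the $I$-function arises tautologically from localization on the quasimap graph space and the theorem follows from quasimap wall-crossing, which bypasses much of the orbifold stable-map bookkeeping at the cost of developing the quasimap formalism. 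In either approach the semi-projectivity hypothesis enters to guarantee that the $\TT$-fixed loci are proper, so that the equivariant integrals defining $\sL$ and the localization contributions are well defined.
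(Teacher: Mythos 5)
Your outline is essentially the argument of the cited sources: this paper does not prove Theorem~\ref{thm:toric_mirror_theorem} itself but imports it from \cite{ccit2,ccfk}, and the proof in \cite{ccit2} proceeds exactly as you describe, via Givental--Brown's characterization of points on $\sL$ by restriction to $\TT$-fixed loci, recursion relations from $\TT$-invariant orbifold $\PP^1$'s, and polynomiality, with semi-projectivity ensuring the localization framework applies. Your attribution of the quasimap wall-crossing route to \cite{ccfk} is likewise accurate, so there is nothing to correct beyond noting that the fixed loci are in general gerbes rather than points, so the restrictions must be compared against the Givental cones of those gerbes rather than of a point.
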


\begin{rmk}
  The statement that $I^S(t,x,{-z}) \in \sL$ has a precise meaning in
  formal geometry: see \S2.3 and Theorem~31 in \cite{ccit2}.  The
  reader may want to work with a slightly vague but more intuitive
  interpretation of this statement: that $I^S(t,x,{-z}) \in \sL$ for
  all values of the parameters $t$ and $x$.  No confusion should
  result, and the statements that we make are valid in the above,
  precise sense.
\end{rmk}

\begin{rmk}
  In the work of Cheong--Ciocan-Fontanine--Kim \cite{ccfk}, a toric
  orbifold $\X$ is represented by a triple $(V, T, \theta)$ where $T$
  is a torus, $V$ is a representation of $T$, and $\theta$ is a a
  character of $T$ such that $V$ has no strictly $\theta$-semistable
  points.  The toric orbifold $\X$ is the stack quotient $[V
  /\!\!/_\theta T]$.  In this language, $S$-extending the stacky fan
  $\bSigma$ corresponds to changing the GIT presentation $(V, T,
  \theta)$ of $\X$.  Thus Theorem~\ref{thm:toric_mirror_theorem} with
  non-trivial $S$-extension (not just
  Theorem~\ref{thm:toric_mirror_theorem} with $S=\varnothing$) can be
  obtained from \cite{ccfk} by considering an appropriate GIT
  presentation of $\X$.
\end{rmk}

\subsection{Condition $\sharp$ and Condition $S$-$\sharp$}
\label{sec:sharp}

Recall that the $J$-function \eqref{eq:J_function} is characterized by
the fact that $J_\X(t,-z)$ is the unique point on $\sL$ of the form
${-z} + t + O(z^{-1})$.  Let $\bSigma$ be a stacky fan and let $S$ be
a finite set equipped with a map $\kappa \colon S \to \Box(\bSigma)$.
Label the elements of $S$ by $\{1,\ldots,m\}$, where $m = |S|$, and
let $s_j = \kappa(j)$.  We say that the $S$-extended stacky fan
$\bSigma$ satisfies \emph{condition $S$-$\sharp$} if and only if
\[
I^S(t,x,{-z}) = {-z} + t + \sum_{j = 1}^m x_j y^{s_j} + O(z^{-1})
\]
If condition $S$-$\sharp$ holds then $J_\X(\tau,z) = I^S(t,x,z)$ where
$\tau = t + \sum_{j = 1}^m x_j y^{s_j}$.  We say that a stacky fan
$\bSigma$ satisfies \emph{condition $\sharp$} if and only if it
satisfies condition $S$-$\sharp$ with $S = \varnothing$.

Condition $\sharp$ is equivalent to the statement: for all $b \in
\Box(\bSigma)$ and all non-zero $\lambda \in \Lambda E^\varnothing_b$
we have:
\[
-K_\X \cdot \lambda +\age(b) +\# \bigl\{i\mid \text{$\lambda_i<0$
 and $\lambda_i\in \ZZ$}\bigr\} \geq 2
\]
This is not automatically satisfied for Fano stacks or even for Fano
orbifolds: see \S\ref{sec:toric-surface}.  The surface ${\mathbb F}_2$
is nonsingular and weak Fano and it does not satisfy
condition~$\sharp$.

\begin{lem}[A simple criterion for condition $\sharp$ to hold]
  \label{lem:sharp_fano}
  Let $\bSigma = (N,\Sigma,\rho)$ be a stacky fan, and let $\X$ be the
  corresponding toric Deligne--Mumford stack.  Suppose that $\X$ is
  smooth and has semi-projective coarse moduli space.  If $\X$ is Fano
  and has \emph{canonical singularities}, that is, if $\age(b)\geq 1$
  for all non-zero elements $b \in \Box(\bSigma)$, then $\X$ satisfies
  condition~$\sharp$.
\end{lem}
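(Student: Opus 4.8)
The plan is to evaluate the three quantities in the criterion for condition $\sharp$ directly in terms of the components $\lambda_i$ of $\lambda \in \Lambda E^\varnothing_b \subset \LL\otimes\QQ$. Since $-K_\X = \sum_{i=1}^n u_i$ and $u_i\cdot\lambda = \lambda_i$ (from the divisor sequence), we have $-K_\X\cdot\lambda = \sum_{i=1}^n \lambda_i$. On the other hand, for $\lambda\in\Lambda^\varnothing_\sigma$ the reduction function satisfies $\overline{v^\varnothing(\lambda)} = \sum_i \langle -\lambda_i\rangle\bar\rho_i$, and because $\Sigma$ is simplicial this representation computes the age of $b = v^\varnothing(\lambda)$; thus $\age(b) = \sum_{i=1}^n \langle -\lambda_i\rangle = \sum_{i=1}^n(\lceil\lambda_i\rceil - \lambda_i)$. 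Adding the two yields the key identity
\[
-K_\X\cdot\lambda + \age(b) = \sum_{i=1}^n \lceil\lambda_i\rceil \in \ZZ,
\]
so the left-hand side of the criterion equals $\sum_i\lceil\lambda_i\rceil + C$ with $C = \#\{i : \lambda_i<0,\ \lambda_i\in\ZZ\}\geq 0$; in particular it is an integer.

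Next I would bring in the hypotheses. As $\X$ is Fano, $-K_\X$ is ample, so $-K_\X\cdot\lambda > 0$ for every nonzero class in the (closed, rational polyhedral) Mori cone $\NE(\X)$; in particular for our $\lambda$. I then split on whether $b = 0$. If $b\neq 0$, the canonical-singularities hypothesis gives $\age(b)\geq 1$, hence $-K_\X\cdot\lambda + \age(b) > 1$; being an integer it is $\geq 2$, and adding $C\geq 0$ settles this case.

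The remaining case $b = 0$ is where the real work lies, since there $\age(b) = 0$ and the Fano bound alone only gives $-K_\X\cdot\lambda\geq 1$. First I would note that $b = 0$ forces $\bar b = 0 = \sum_i\langle -\lambda_i\rangle\bar\rho_i$, and simpliciality then forces every $\langle -\lambda_i\rangle = 0$, i.e.\ all $\lambda_i\in\ZZ$; consequently $\lambda\in\ZZ^n$ and $\rho(\lambda) = v^\varnothing(\lambda) = 0$, so $\lambda\in\LL$. The criterion now reads $\sum_i\lambda_i + \#\{i:\lambda_i<0\}\geq 2$. If $\sum_i\lambda_i\geq 2$ we are done, so suppose $\sum_i\lambda_i = 1$. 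The main obstacle is to exclude the case that all $\lambda_i\geq 0$: were this so, then since the $\lambda_i$ are integers summing to $1$ we would have $\lambda = e_j$ for some $j$, forcing $\rho_j = \rho(e_j) = \rho(\lambda) = 0$; but $\bar\rho_j$ spans a one-dimensional cone of $\Sigma$, so $\rho_j\neq 0$, a contradiction. Hence some $\lambda_i<0$, giving $\#\{i:\lambda_i<0\}\geq 1$ and $\sum_i\lambda_i + \#\{i:\lambda_i<0\}\geq 2$. The genuinely delicate point throughout is precisely this boundary analysis at $b = 0$ with $-K_\X\cdot\lambda = 1$, where neither the Fano nor the canonical hypothesis suffices on its own and one must exploit the structure of $\LL = \ker\rho$.
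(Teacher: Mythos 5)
Your proof is correct, and in the decisive case it takes a genuinely different route from the paper's. Both arguments rest on the same two pillars: integrality of $\ord\lambda := -K_\X\cdot\lambda + \age(b) + N_\lambda$ (which you make explicit and verifiable via the pleasant identity $-K_\X\cdot\lambda + \age(b) = \sum_{i=1}^n \lceil\lambda_i\rceil$, using $-K_\X = \sum_i u_i$, $u_i\cdot\lambda = \lambda_i$, and $\age(b) = \sum_i\langle-\lambda_i\rangle$ from simpliciality --- the paper merely asserts this integrality), and Fano positivity $-K_\X\cdot\lambda>0$ on $\NE(\X)\setminus\{0\}$. Your case $b\neq 0$ subsumes the paper's first two cases (its split is on $N_\lambda$ rather than on $b$, but that is cosmetic). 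The real divergence is at $b=0$ with all $\lambda_i\in\ZZ_{\geq 0}$ and $\sum_i\lambda_i = 1$: the paper invokes an unproved geometric ``key observation'' --- that a nonzero compact curve class on a toric stack pairs strictly positively with at least two toric divisors --- which yields $\sum_i\lambda_i\geq 2$ outright, whereas you close this case by pure lattice algebra: $\lambda$ would have to equal a standard basis vector $e_j$, but $e_j\notin\LL=\Ker\rho$ since $\bar\rho_j$ spans a ray of $\Sigma$ and is in particular nonzero. Your argument is the more elementary and self-contained one (it needs nothing about compactness of curves or effectivity beyond $-K_\X\cdot\lambda\geq 1$), at the cost of excluding only the boundary value $\sum_i\lambda_i=1$ rather than establishing the stronger geometric fact; the paper's observation carries more geometric content but is stated without proof and requires knowing that classes in $\Lambda E^\varnothing_0$ are (non-negative combinations of) compact curve classes.
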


\begin{proof}
  The key observation is that if $\X$ is a toric stack and $\lambda \in
  \Lambda E^\varnothing_b\setminus\{0\}$ 
is the class of a \emph{compact} curve, then 
  there are at least two toric divisors $D_i\subset \X$ such that
  $\lambda\cdot D_i>0$.  Set:
  \[
  N_\lambda=\# \bigl\{i\mid \text{$\lambda_i<0$ and $\lambda_i\in
    \ZZ$}\bigr\}
  \]
  The quantity $\ord \lambda :=-K_\X \cdot \lambda +\age(b)
  +N_\lambda$ is an integer.  If $N_l\geq 1$ then, since $\X$ is Fano,
  $-K_\X\cdot \lambda>0$ and $\ord \lambda\geq 2$.  If $N_\lambda=0$
  and $\age(b)\geq 1$, the same argument applies. Otherwise $b=0$, and
  then $\lambda_i\in \ZZ$ for all $i$, and all $\lambda_i\geq 0$. By
  the key observation, at least two of the $\lambda_i$ are strictly
  positive and we are done.
\end{proof}

\begin{rmk}
  Note that the criterion in Lemma~\ref{lem:sharp_fano} is far from
  best possible: the more positive the anticanonical class is, the
  worse the singularities are allowed to be.
\end{rmk}
\section{Applying the Mirror Theorem}
\label{sec:examples}

\subsection{Example 1: $B\mu_3$}

This is the toric Deligne--Mumford stack $\X$ associated to the stacky
fan $\bSigma = (N,\Sigma,\rho)$, where $N = \frac{1}{3}\ZZ/\ZZ$,
$\Sigma = \{0\}$, and $\rho \colon (0)\to N$ is the zero map.  We have
$\Box(\bSigma)=\bigl\{0,\frac{1}{3},\frac{2}{3} \bigr\}$.  We consider
the $S$-extended $I$-function where $S = \Box(\bSigma)$ and $S \to
N_\Sigma$ is the canonical inclusion.  The $S$-extended fan map is:
\[
\rho^S = 
\begin{pmatrix}
  0 & \frac{1}{3} & \frac{2}{3} 
\end{pmatrix}
\colon \ZZ^3 \to N
\]
so that $\LL^S_\QQ=\QQ^3$ and $\LL^S$ is the lattice of vectors:
\[
\begin{pmatrix}
  k_0\\
  k_1\\
  k_2
\end{pmatrix}
\in \ZZ^3
\quad
\text{such that}
\quad
k_1+2k_2 \equiv 0 \bmod 3
\]
The $S$-extended Mori cone is the positive octant.  We have $\Lambda^S =
\ZZ^3$, and the reduction function is
\[
v^S\colon 
\begin{pmatrix}
  k_0\\
  k_1\\
  k_2
\end{pmatrix}
\mapsto
\Bigl\langle \frac{k_1}{3}+\frac{2k_2}{3}\Bigr\rangle.
\]
The $S$-extended $I$-function is:
\[
I^S(x,z) = 
z \sum_{k_1=0}^\infty
\sum_{k_2=0}^\infty
\sum_{k_3=0}^\infty
\frac{x_0^{k_0}x_1^{k_1}x_2^{k_2}}{z^{k_0+k_1+k_2}k_0!k_1!k_2!} 
\fun_{\langle \frac{k_1}{3}+\frac{2k_2}{3}\rangle} 
\]
This is homogeneous of degree~$1$ if we set $\deg x_0=\deg x_1=\deg
x_2=\deg z =1$.  Since:
\[
I^S(x,z) = z + x_0 \fun_0 + x_1 \fun_{\frac{1}{3}} + x_2 \fun_{\frac{2}{3}}
+ O(z^{-1})
\]
condition $S$-$\sharp$ holds, and
Theorem~\ref{thm:toric_mirror_theorem} implies that:
\[
J_\X\big(x_0 \fun_0 + x_1 \fun_{\frac{1}{3}} + x_2 \fun_{\frac{2}{3}},z\big) = I^S(x,z)
\]

\subsection{Example 2: $\frac{1}{3}(1,1)$}

This is the toric Deligne--Mumford stack $\X$ associated to the stacky
fan $\bSigma = (N,\Sigma,\rho)$, where:
\[
\rho = 
\begin{pmatrix}
  1 & 0 \\
  0 & 1
\end{pmatrix}
\colon \ZZ^2 \to N=\ZZ^2+\textstyle \frac{1}{3}(1,1)\ZZ.
\]
and $\Sigma$ is the positive quadrant in $N_\QQ$.  We have
$\Box(\bSigma)=\bigl\{0,\frac{1}{3}(1,1),\frac{2}{3}(1,1) \bigr\}$; to
streamline the notation we will identify $\Box(\bSigma)$ with the set
$\bigl\{0,\frac{1}{3},\frac{2}{3}\bigr\}$ via the map $\kappa$ that
sends $x$ to $x(1,1)$.  We consider the $S$-extended $I$-function
where $S = \bigl\{0,\frac{1}{3}\bigr\}$ and $S$ maps to $N_\Sigma$ via
$\kappa$.  The $S$-extended fan map is:
\[
\rho^S =
\begin{pmatrix}
  1 & 0 & 0 & \frac{1}{3} \\
  0 & 1 & 0 & \frac{1}{3} 
\end{pmatrix}
\colon \ZZ^{2+2} \to N
\]
so that $\LL^S_\QQ\cong \QQ^2$ is identified as a subset of
$\QQ^{2+2}$ via the inclusion:
\[
\begin{pmatrix}
  k_0\\
  k_1
\end{pmatrix}
\mapsto
\begin{pmatrix}
  0 & -\frac{1}{3}  \\
  0 & -\frac{1}{3}  \\
  1 & 0             \\
  0 & 1               
\end{pmatrix}
\begin{pmatrix}
  k_0\\
  k_1
\end{pmatrix}
\] 
The $S$-extended Mori cone is the positive quadrant.  We see that
$\Lambda^S \subset \LL^S_\QQ$ is the lattice of vectors:
\[
\begin{pmatrix}
  k_0\\
  k_1
\end{pmatrix}
\quad \text{such that $k_0$,~$k_1 \in \ZZ$}
\]
and that the reduction function is:
\[
v^S\colon 
\begin{pmatrix}
  k_0\\
  k_1
\end{pmatrix}
\mapsto
\Bigl\langle \frac{k_1}{3}\Bigr\rangle
\]
The $S$-extended $I$-function is:
\[
I^S(t,x,z) = 
z e^{(u_1 t_1 + u_2 t_2)/z}
\sum_{k_0=0}^\infty \sum_{k_1 = 0}^\infty
\frac{x_0^{k_0}x_1^{k_1}}{z^{k_0+k_1}k_0!k_1!} 
\mathbf{1}_{\langle \frac{k_1}{3}\rangle}
\prod_{\substack{\langle b\rangle = \langle
    -\frac{k_1}{3}\rangle\\-\frac{k_1}{3}<b\leq
    0}}(u_1+bz)(u_2+bz) . 
\]
This is homogeneous of degree~$1$ if we set $\deg t_1=\deg t_2=0$,
$\deg x_0=\deg z =1$, and $\deg x_1 = \frac{1}{3}$.
Theorem~\ref{thm:toric_mirror_theorem} gives that $I^S(x,-z) \in
\sL_\X$, and we have:
\[
I^S(t,x,z) = z + t_1 u_1 + t_2 u_2 + x_0 \fun_0 + x_1 \fun_{\frac{1}{3}} + O(z^{-1})
\]
Thus condition $S$-$\sharp$ holds, and we obtain an expression for the
$J$-function of $\X$:
\[
J_\X\big( t_1 u_1 + t_2 u_2 + x_0 \fun_0 + x_1 \fun_{\frac{1}{3}},z\big) = I^S(t,x,z)
\]

\subsection{Example 3: $\PP(1,1,3)$}

This is the toric Deligne--Mumford stack $\X$ associated to the stacky
fan $\bSigma = (N,\Sigma,\rho)$, where:
\[
\rho = 
\begin{pmatrix}
  1 & 0 & -\frac{1}{3}\\
  0 & 1 & -\frac{1}{3}
\end{pmatrix}
\colon \ZZ^3 \to N=\ZZ^2+\frac{1}{3}(1,1)\ZZ 
\]
and $\Sigma$ is the complete fan in $N_\QQ \cong \QQ^2$ with rays
given by the columns of $\rho$. We identify
$\Box(\bSigma)=\bigl\{0,\frac{1}{3}(1,1),\frac{2}{3}(1,1) \bigr\}$
with the set $\bigl\{0,\frac{1}{3},\frac{2}{3}\bigr\}$ via the map
$\kappa$ that sends $x$ to $x(1,1)$.  We consider the $S$-extended
$I$-function where $S = \bigl\{0,\frac{1}{3}\bigr\}$ and $S$ maps to
$N_\Sigma$ via $\kappa$.  The $S$-extended fan map is:
\[
\rho^S =
\begin{pmatrix}
  1 & 0 & -\frac{1}{3} & 0 & \frac{1}{3} \\
  0 & 1 & -\frac{1}{3} & 0 & \frac{1}{3} 
\end{pmatrix}
\colon \ZZ^{3+2} \to N
\]
so that $\LL^S_\QQ\cong \QQ^3$ is identified as a subset of
$\QQ^{3+2}$ via the inclusion:
\[
\begin{pmatrix}
  l  \\
  k_0\\
  k_1
\end{pmatrix}
\mapsto
\begin{pmatrix}
  \frac{1}{3} & 0 & -\frac{1}{3} \\
  \frac{1}{3} & 0 & -\frac{1}{3} \\
  1 & 0 & 0 \\
  0 & 1 & 0 \\
  0 & 0 & 1 
\end{pmatrix}
\begin{pmatrix}
  l  \\
  k_0\\
  k_1
\end{pmatrix}
\] 
The $S$-extended Mori cone is the positive octant. We see that
$\Lambda^S\subset \LL^S_\QQ$ is the lattice of vectors:
\[
\begin{pmatrix}
  l \\
  k_0\\
  k_1
\end{pmatrix}
\quad
\text{such that $l$,~$k_0$,~$k_1 \in \ZZ$}
\]
and that the reduction function is:
\[
v^S\colon 
\begin{pmatrix}
  l \\
  k_0\\
  k_1
\end{pmatrix}
\mapsto
\Bigl\langle-\frac{l}{3} +\frac{k_1}{3} \Bigr\rangle
\]
Let us identify the Novikov ring $\Novikov$ with $\CC[\![Q]\!]$ via the
map that sends $d \in H_2(X;\ZZ)$ to $Q^{\int_d c_1(\cO(3))}$.  The
$S$-extended $I$-function is:
\begin{multline*}
  I^S(t,x,z) = 
  z e^{(u_1 t_1 + u_2 t_2 + u_3 t_3)/z} \\
  \times
  \sum_{l=0}^\infty 
  \sum_{k_1=0}^\infty
  \sum_{k_2=0}^\infty
  \frac{Q^l x_0^{k_0} x_1^{k_1} e^{(t_1+t_2+3t_3)l}}{z^{k_0+k_1} k_0! k_1!}
  \frac{\prod_{\substack{\langle b \rangle=\langle
        \frac{l}{3}-\frac{k_1}{3}\rangle\\ b\leq 0}}(u_1+bz)(u_2+bz)}{
    \prod_{\substack{\langle b\rangle = \langle
        \frac{l}{3}-\frac{k_1}{3}\rangle\\
        b\leq \frac{l}{3}-\frac{k_1}{3}}} (u_1+bz)(u_2+bz)}
  \frac{  \fun_{\langle -\frac{l}{3}+\frac{k_1}{3}\rangle}
  }{\prod_{0<b\leq l}(u_3+bz)}
\end{multline*}
This is homogeneous of degree $1$ if we set $\deg t_1 = \deg t_2 =
\deg t_3 = 0$, $\deg x_0 = \deg z = 1$, $\deg x_1 = \frac{1}{3}$, and
$\deg Q = \frac{5}{3}$.  Theorem~\ref{thm:toric_mirror_theorem}
gives that $I^S(x,t,-z) \in \sL$, and we have:
\[
I^S(t,x,z) = z + t_1 u_1 + t_2 u_2 + t_3 u_3 + x_0 \fun_0 + x_1
\fun_{\frac{1}{3}} + O(z^{-1})
\]
Thus condition $S$-$\sharp$ holds, and we obtain an expression for the
$J$-function of $\X$:
\[
J_\X\big(z + t_1 u_1 + t_2 u_2 + t_3 u_3 + x_0 \fun_0 + x_1
\fun_{\frac{1}{3}},z\big) = I^S(t,x,z)
\]

\begin{rmk}
  Condition $\sharp$ holds for any weighted projective space, but
  condition $S$-$\sharp$ fails in general.  Indeed we chose $S =
  \bigl\{0,\frac{1}{3}\bigr\}$ here rather than $S = \Box(\bSigma) =
  \bigl\{0,\frac{1}{3}, \frac{2}{3}\bigr\}$ because with the latter
  choice condition $S$-$\sharp$ fails and we do not obtain a closed
  form expression for $J_\X$.  This is what we meant in
  \cite[Remark~34]{ccit2}.  
\end{rmk}

\begin{rmk}
  \label{rmk:P113_compare}
  The non-equivariant limit of our $S$-extended $I$-function, in the
  notation of \cite{cclt}, is:
  \begin{equation}
    \label{eq:P113_non_equivariant}
    z e^{(t_1 + t_2 + 3 t_3)P/z} \sum_{l=0}^\infty 
    \sum_{k_1=0}^\infty
    \sum_{k_2=0}^\infty
    \frac{Q^l x_0^{k_0} x_1^{k_1} e^{(t_1+t_2+3t_3)l}}{z^{k_0+k_1} k_0! k_1!}
    \frac{\prod_{\substack{\langle b \rangle=\langle
          \frac{l}{3}-\frac{k_1}{3}\rangle\\ b\leq 0}}(P+bz)^2}
    {\prod_{\substack{\langle b\rangle = \langle
          \frac{l}{3}-\frac{k_1}{3}\rangle\\
          b\leq \frac{l}{3}-\frac{k_1}{3}}} (P+bz)^2}
    \frac{  \fun_{\langle -\frac{l}{3}+\frac{k_1}{3}\rangle}
    }{\prod_{0<b\leq l}(3P+bz)}
  \end{equation}
  Theorem~\ref{thm:toric_mirror_theorem} implies that this lies on the
  Lagrangian submanifold $\sL^\non$ for non-equivariant
  Gromov--Witten theory of $\X$ and, since
  \eqref{eq:P113_non_equivariant} takes the form
  \[
  z + (t_1 + t_2 + 3 t_3) P  + x_0 \fun_0 + x_1
  \fun_{\frac{1}{3}} + O(z^{-1})
  \]
  we see that this determines the non-equivariant $J$-function
  $J_\X(t,x,z)$ for $t = t_1 P + x_0 \fun_0 + x_1 \fun_{\frac{1}{3}}$.
  Theorem~\ref{thm:toric_mirror_theorem} thus determines the orbifold
  quantum product $\star_t$, for $t$ as above, in a straightforward
  way.  This improves on the results of \cite{cclt}, which determine
  $J_\X(t,x,z)$ for $t$ in the small quantum cohomology locus $H^2(\X)
  \subset H^\bullet_\CR(\X)$ and thus determine the small quantum
  orbifold cohomology ring of $\X$.
\end{rmk}  

\begin{rmk}
  To determine the full big quantum orbifold cohomology ring of $\X$
  (equivariant or non-equivariant) from
  Theorem~\ref{thm:toric_mirror_theorem} is more involved.  One needs
  to take $S = \bigl\{0,\frac{1}{3}, \frac{2}{3}\bigr\}$, so in
  particular condition $S$-$\sharp$ fails, and then compute the big
  $J$-function $J_\X(t,z)$ by Birkhoff factorization, as in
  \S\ref{sec:P2_Birkhoff} below.  We do not know a closed-form
  expression for the structure constants.
\end{rmk}

\begin{rmk}
  Note that:
  \[
  I^S_{\PP(1,1,3)}(0,x,z)|_{Q=0}=I^S_{\frac{1}{3}(1,1)}(0,x,z)
  \]
  and that, as discussed in Remark~\ref{rmk:P113_compare},
  $I^S(0,0,z)$ essentially coincides, after passing to the non-equivariant
  limit and changing notation for degrees (replacing $d$ by
  $\frac{d}{3}$), with the \emph{small $I$-function} of $\PP(1,1,3)$ as
  written in \cite{cclt}.
\end{rmk}

\subsection{Example 4: $\PP(2,2)$}
\label{sec:P(2,2)}

This is the toric Deligne--Mumford stack $\X$ associated to the stacky
fan $\bSigma = (N,\Sigma,\rho)$, where:
\[
\rho = 
\begin{pmatrix}
  -1 & 1 \\
   0 & 1
\end{pmatrix}
\colon \ZZ^2 \to N=\ZZ \oplus (\ZZ/2\ZZ).
\]
and $\Sigma$ is the fan in $N_\QQ \cong \QQ$ with rays given by ${-1}$
and $1$. We identify $\Box(\bSigma)=\bigl\{(0,0),(0,1) \bigr\}$ with
the set $\bigl\{0,\frac{1}{2}\bigr\}$ via the map $\kappa$ that sends
$0$ to $(0,0)$ and $\frac{1}{2}$ to $(0,1)$.  We consider the
$S$-extended $I$-function where $S =
\bigl\{(0,0),(0,1),(-1,1),(1,0)\bigr\}$ and $S \to N_\Sigma$ is the
canonical inclusion.  The $S$-extended fan map is:
\[
\rho^S =
\begin{pmatrix}
  -1 & 1 & 0 & 0 & -1 & 1\\
   0 & 1 & 0 & 1 & 1  & 0
\end{pmatrix}
\colon \ZZ^{2+4} \to N
\]
so that $\LL^S_\QQ\cong \QQ^5$ is identified as a subset of
$\QQ^{2+4}$ via the inclusion:
\[
\begin{pmatrix}
  l  \\
  k_0\\
  k_1\\
  k_2\\
  k_3
\end{pmatrix}
\mapsto
\begin{pmatrix}
  1 & 0 & 0 & -1& 0\\
  1 & 0 & 0 & 0 &-1\\
  0 & 1 & 0 & 0 & 0\\
  0 & 0 & 1 & 0 & 0\\
  0 & 0 & 0 & 1 & 0\\
  0 & 0 & 0 & 0 & 1 
\end{pmatrix}
\begin{pmatrix}
  l  \\
  k_0\\
  k_1\\
  k_2\\
  k_3
\end{pmatrix}
\] 
The $S$-extended Mori cone is the positive orthant.
We see that $\Lambda^S \subset \LL^S_\QQ$ is the lattice of vectors:
\[
\begin{pmatrix}
  l\\
  k_0\\
  k_1\\
  k_2\\
  k_3
\end{pmatrix}
\quad
\text{such that 
$l$,~$k_0$,~$k_1$,~$k_2$,~$k_3 \in \ZZ$}
\]
and that the reduction function is:
\[
v^S\colon 
\begin{pmatrix}
  l\\
  k_0\\
  k_1\\
  k_2\\
  k_3
\end{pmatrix}
\mapsto
\Bigl\langle \frac{l+k_1+k_2+k_3}{2}\Bigr\rangle
\]
Let us identify the Novikov ring $\Novikov$ with $\CC[\![Q]\!]$ via the
map that sends $d \in H_2(X;\ZZ)$ to $Q^{\int_d c_1(\cO(2))}$.  The
$S$-extended $I$-function is:
\begin{multline*}
  I^S(t,x,z) = 
  z e^{(u_1 t_1 + u_2 t_2)/z}  \\
  \times
  \sum_{(l,k_0,\ldots,k_3) \in \NN^5}
  \frac{Q^l x_0^{k_0} x_1^{k_1} x_2^{k_2} x_3^{k_3} e^{(t_1+t_2)l}}
  {z^{k_0+k_1+k_2+k_3} k_0! k_1! k_2! k_3!}
  \frac{\prod_{b\leq 0}(u_1+bz)}{
    \prod_{b\leq l-k_2}(u_1+bz)}
  \frac{\prod_{b\leq 0}(u_2+bz)}{
    \prod_{b\leq l-k_3}(u_2+bz)}
  \fun_{\bigl\langle \frac{l+k_1+k_2+k_3}{ 2}\bigr \rangle}
\end{multline*}
This is homogeneous of degree $1$ if we set $\deg t_1 = \deg t_2 =
\deg x_2 = \deg x_3 = 0$, $\deg x_0 = \deg x_1 = \deg z = 1$, and
$\deg Q = 2$.  Theorem~\ref{thm:toric_mirror_theorem} gives that
$I^S(x,t,-z) \in \sL$, and straightforward calculation gives:
\[
I^S(t,x,z) = \textstyle z \fun_0 + \tau(x,t) + O(z^{-1})
\]
where:
\begin{multline}
  \label{eq:P22_tau}
  \tau(t,x) =  \textstyle x_0 \fun_0 + \big(t_1 + \frac{1}{2}
  \log(1-x_2^2) \big) u_1 \fun_0 + \big( t_2 + \frac{1}{2}
  \log(1-x_3^2) \big)  u_2 \fun_0 \\ 
  \textstyle
  + x_1 \fun_{\frac{1}{2}} + 
  \frac{1}{2} \log\left(
    \frac{1+x_2}{1-x_2} \right) u_1 \fun_{\frac{1}{2}} 
  + \frac{1}{2} \log\left(
    \frac{1+x_3}{1-x_3} \right) u_2 \fun_{\frac{1}{2}} 
\end{multline}
Thus $J_\X\big(\tau(t,x),z\big) = I^S(t,x,z)$.  We can invert the
mirror map $(x,t) \mapsto \tau(x,t)$ in closed form: if $\tau(x,t) =
a_0 \fun_0 + a_1 u_1 \fun_0 + a_2 u_2 \fun_0 + b_0 \fun_{\frac{1}{2}}
+ b_1 u_1 \fun_{\frac{1}{2}} + b_2 u_2 \fun_{\frac{1}{2}}$ then:
\begin{equation}
  \label{eq:P22_inverse_mirror_map}
  \begin{aligned}
    x_0 &= a_0 & x_1 &= b_0 & x_2 &= \tanh b_1\\
    x_3 &= \tanh b_2 & t_1 &= a_1 - \log \sech b_1 & t_2 &= a_2 - \log
    \sech b_2
  \end{aligned}
\end{equation}
This gives a closed-form expression for the $J$-function
$J_\X(\tau,z)$.

\begin{rmk}
  It is instructive to consider the specialisations of $I^S(t,x,z)$ to
  $Q=x_2=x_3=0$ and to $x_0=x_1=x_2=x_3=0$. Note that $\PP(2,2)$
  satisfies condition $\sharp$ but not condition $S$-$\sharp$.
\end{rmk}

\subsection{Example 5: $\PP^1 \times B\mu_2$}
\label{sec:PP1_times_Bmu2}
This is the toric Deligne--Mumford stack $\X$ associated to the stacky
fan $\bSigma = (N,\Sigma,\rho)$, where:
\[
\rho = 
\begin{pmatrix}
  -1 & 1 \\
  0 & 0
\end{pmatrix}
\colon \ZZ^2 \to N=\ZZ\oplus(\ZZ/2\ZZ).
\]
and $\Sigma$ is the fan in $N_\QQ \cong \QQ$ with rays given by ${-1}$
and $1$. We identify $\Box(\bSigma)=\bigl\{(0,0),(0,1) \bigr\}$ with
the set $\bigl\{0,\frac{1}{2}\bigr\}$ via the map $\kappa$ that sends
$0$ to $(0,0)$ and $\frac{1}{2}$ to $(0,1)$.  We consider the
$S$-extended $I$-function where $S =
\bigl\{(0,0),(0,1),(-1,1),(1,1)\bigr\}$ and $S \to N_\Sigma$ is the
canonical inclusion.  The $S$-extended fan map is:
\[
\rho^S =
\begin{pmatrix}
  -1 & 1 & 0 & 0 & -1 & 1  \\
   0 & 0 & 0 & 1 & 1 & 1
\end{pmatrix}
\colon \ZZ^{2+4} \to N
\]
so that $\LL^S_\QQ\cong \QQ^5$ is identified as a subset of
$\QQ^{2+4}$ via the inclusion:
\[
\begin{pmatrix}
  l  \\
  k_0\\
  k_1\\
  k_2\\
  k_3
\end{pmatrix}
\mapsto
\begin{pmatrix}
  1 & 0 & 0 & -1 & 0\\
  1 & 0 & 0 &  0 &-1\\
  0 & 1 & 0 &  0 & 0\\
  0 & 0 & 1 &  0 & 0\\
  0 & 0 & 0 &  1 & 0\\
  0 & 0 & 0 &  0 & 1
\end{pmatrix}
\begin{pmatrix}
  l  \\
  k_0\\
  k_1\\
  k_2\\
  k_3
\end{pmatrix}
\] 
The $S$-extended Mori cone is the positive orthant.  We see that
$\Lambda^S \subset \LL^S_\QQ$ is the lattice of vectors:
\[
\begin{pmatrix}
  l\\
  k_0\\
  k_1\\
  k_2\\
  k_3
\end{pmatrix}
\quad \text{such that $l$,~$k_0$,~$k_1$,~$k_2$,~$k_3\in \ZZ$}
\]
and that the reduction function is:
\[
v^S\colon
\begin{pmatrix}
  l\\
  k_0\\
  k_1\\
  k_2\\
  k_3
\end{pmatrix}
\mapsto
\Bigl\langle \frac{k_1+k_2+k_3}{2}\Bigr\rangle.
\]
Let us identify the Novikov ring $\Novikov$ with $\CC[\![Q]\!]$ via the
map that sends $d \in H_2(X;\ZZ)$ to $Q^{\int_d c_1(\cO_{\PP^1}(1))}$.  The
$S$-extended $I$-function is:
\begin{multline*}
  I^S(t,x,z) = 
  z e^{(u_1 t_1 + u_2 t_2)/z}  \\
  \times
  \sum_{(l,k_0,\ldots,k_3) \in \NN^5}
  \frac{Q^l x_0^{k_0} x_1^{k_1} x_2^{k_2} x_3^{k_3} e^{(t_1+t_2)l}}
  {z^{k_0+k_1+k_2+k_3} k_0! k_1! k_2! k_3!}
  \frac{\prod_{b\leq 0}(u_1+bz)}{
    \prod_{b\leq l-k_2}(u_1+bz)}
  \frac{\prod_{b\leq 0}(u_2+bz)}{
    \prod_{b\leq l-k_3}(u_2+bz)}
  \fun_{\bigl\langle \frac{k_1+k_2+k_3}{ 2}\bigr \rangle}
\end{multline*}
Except for the difference in reduction function, this coincides with
the $S$-extended $I$-function for $\PP(2,2)$ in \S\ref{sec:P(2,2)}.  Once
again, Theorem~\ref{thm:toric_mirror_theorem} gives
that $I^S(x,t,-z) \in \sL$, and:
\[
I^S(t,x,z) = \textstyle z \fun_0 + \tau(x,t) + O(z^{-1})
\]
with $\tau(x,t)$ as in \eqref{eq:P22_tau}.  Thus
$J_\X\big(\tau(t,x),z\big) = I^S(t,x,z)$.  Inverting the mirror map
\eqref{eq:P22_inverse_mirror_map} gives a closed-form expression for
the $J$-function $J_\X(\tau,z)$.

\subsection{Example 6: $\PP_{2,2}$}
\label{sec:P22}
This is the unique Deligne--Mumford stack with coarse moduli space
equal to $\PP^1$, isotropy group $\mu_{2}$ at $0 \in \PP^1$, isotropy
group $\mu_{2}$ at $\infty \in \PP^1$, and no other non-trivial
isotropy groups.  It is the toric Deligne--Mumford stack $\X$
associated to the stacky fan $\bSigma = (N,\Sigma,\rho)$, where:
\[
\rho = 
\begin{pmatrix}
  -1 & 1 
\end{pmatrix}
\colon \ZZ^2 \to N=\ZZ+\frac{1}{2}\ZZ.
\]
and $\Sigma$ is the fan in $N_\QQ \cong \QQ$ with rays given by ${-1}$
and $1$. We identify $\Box(\bSigma)$ with the set
$\bigl\{(0,0),\bigl(\frac{1}{2},0\bigr), \bigl(0,\frac{1}{2}\bigr)
\bigr\}$ via the map $\rho$.  We consider the $S$-extended
$I$-function where $S=\Box(\bSigma)$ and $S \to N_\Sigma$ is the
canonical inclusion.  The $S$-extended fan map is:
\[
\rho^S =
\begin{pmatrix}
  -1 & 1 & 0 & -\frac{1}{2} & \frac{1}{2} 
\end{pmatrix}
\colon \ZZ^{2+3} \to N
\]
so that $\LL^S_\QQ\cong \QQ^4$ is identified as a subset of
$\QQ^{2+3}$ via the inclusion:
\[
\begin{pmatrix}
  l  \\
  k_0\\
  k_1\\
  k_2
\end{pmatrix}
\mapsto
\begin{pmatrix}
  \frac{1}{2} & 0 & -\frac{1}{2}& 0           \\
  \frac{1}{2} & 0 & 0           & -\frac{1}{2}\\
  0 & 1 & 0 & 0\\
  0 & 0 & 1 & 0\\
  0 & 0 & 0 & 1
\end{pmatrix}
\begin{pmatrix}
  l  \\
  k_0\\
  k_1\\
  k_2
\end{pmatrix}
\] 
The $S$-extended Mori cone is the positive orthant.  We see that
$\Lambda^S \subset \LL^S_\QQ$ is the subset (\emph{not} sublattice) of
vectors:
\[
\begin{pmatrix}
  l\\
  k_0\\
  k_1\\
  k_2
\end{pmatrix}
\quad \text{such that $l$,~$k_0$,~$k_1$,~$k_2 \in \ZZ$ and at least
  one of $l-k_1$,~$l-k_2$ is even}
\]
and that the reduction function is:
\[
v^S\colon 
\begin{pmatrix}
  l\\
  k_0\\
  k_1\\
  k_2
\end{pmatrix}
\mapsto
\Bigl( \textstyle \bigl\langle \frac{k_1-l}{2}\bigr\rangle,
\bigl\langle \frac{k_2-l}{2}\bigr\rangle\Bigr).
\]
Let us identify the Novikov ring $\Novikov$ with $\CC[\![Q]\!]$ via the
map that sends $d \in H_2(X;\ZZ)$ to $Q^{\int_d c_1(\cO_X(1))}$.  The
$S$-extended $I$-function is:
\begin{multline*}
  I^S(t,x,z) = 
  z e^{(u_1 t_1 + u_2 t_2)/z}  \\
  \times
  \sum_{(l,k_0,k_1,k_2) \in \Lambda^S}
  \frac{Q^l x_0^{k_0} x_1^{k_1} x_2^{k_2} e^{(t_1+t_2)l}}
  {z^{k_0+k_1+k_2} k_0! k_1! k_2!}
  \frac{\prod_{\substack{\langle b \rangle = \bigl\langle
        \frac{l-k_1}{ 2} \bigr
        \rangle \\ b\leq 0}}(u_1+bz)}{
    \prod_{\substack{\langle b \rangle = \bigl\langle
        \frac{l-k_1}{ 2} \bigr
        \rangle \\ b\leq \frac{l-k_1}{2}}}(u_1+bz)}
  \frac{\prod_{\substack{\langle b \rangle = \bigl\langle
        \frac{l-k_2}{ 2} \bigr
        \rangle \\ b\leq 0}}(u_2+bz)}{
    \prod_{\substack{\langle b \rangle = \bigl\langle
        \frac{l-k_2}{ 2} \bigr
        \rangle \\ b\leq \frac{l-k_2}{2}}}(u_2+bz)}
  \fun_{\bigl(\bigl\langle \frac{k_1-l}{ 2}\bigr \rangle, \bigl\langle \frac{k_2-l}{ 2}\bigr \rangle\bigr)}
\end{multline*}
This is homogeneous of degree $1$ if we set $\deg t_1 = \deg t_2 = 0$,
$\deg x_0 = \deg Q = \deg z = 1$, and $\deg x_1 = \deg x_2 =
\frac{1}{2}$.  Theorem~\ref{thm:toric_mirror_theorem} gives that
$I^S(x,t,-z) \in \sL$, and since:
\[
I^S(x,t,z) = z \fun_{(0,0)} + t_1 u_1 \fun_{(0,0)} + t_2 u_2
\fun_{(0,0)} + x_0 \fun_{(0,0)} + x_1 \fun_{(\frac{1}{2},0)} + x_2
\fun_{(0,\frac{1}{2})} + O(z^{-1})
\]
we conclude that:
\[
J_\X\big(t_1 u_1 \fun_{(0,0)} + t_2 u_2 \fun_{(0,0)} + x_0 \fun_{(0,0)} +
x_1 \fun_{(\frac{1}{2},0)} + x_2 \fun_{(0,\frac{1}{2})},z\big) = 
I^S(x,t,z)
\]

\subsection{Example 7: a toric surface}
\label{sec:toric-surface}

We have already seen examples (in \S\ref{sec:P(2,2)} and
\S\ref{sec:PP1_times_Bmu2}) where condition $S$-$\sharp$ fails.  We
now give the simplest example of a \emph{Fano} toric stack such that
condition~$\sharp$ fails.  Consider the toric Deligne--Mumford stack
$\X$ associated to the stacky fan $\bSigma = (N,\Sigma,\rho)$, where:
\[
\rho = 
\begin{pmatrix}
  1 & 0 & -1 &  0 \\
  0 & 1 & -3 & -2
\end{pmatrix}
\colon \ZZ^2 \to N=\ZZ^2
\]
and $\Sigma$ is the complete fan in $N_\QQ \cong \QQ^2$ with rays
given by the columns of $\rho$.  We identify
$\Box(\bSigma)=\bigl\{(0,0),(0,-1)\bigr\}$ with the set
$\bigl\{0,\frac{1}{2}\bigr\}$ via the map $\kappa$ that sends $x$ to
$(0,-2x)$.  We identify $\LL_\QQ\cong \QQ^2$ as a subset of $\QQ^{4}$
via the inclusion:
\[
\begin{pmatrix}
  l_1\\
  l_2
\end{pmatrix}
\mapsto
\begin{pmatrix}
  1 & 0 \\
  3 & 2 \\
  1 & 0 \\
  0 & 1 
\end{pmatrix}
\begin{pmatrix}
  l_1\\
  l_2
\end{pmatrix}
\] 
The Mori cone $\NE(\X)$ is the cone of vectors
\[
\begin{pmatrix}
  l_1\\
  l_2
\end{pmatrix}\in \RR^2
\quad
\text{such that $l_1\geq 0$ and $3l_1+2l_2 \geq 0$.}
\]
We see that $\Lambda^\varnothing \subset \LL_\QQ$ is the lattice of
vectors:
\[
\begin{pmatrix}
  l_1\\
  l_2
\end{pmatrix}
\in \NE \X
\quad
\text{such that $l_1\in \ZZ$ and $l_2 \in \frac1{2} \ZZ$}
\]
and that the reduction function is:
\[
 v^S \colon
\begin{pmatrix}
  l_1\\
  l_2
\end{pmatrix}
\mapsto \langle {-l_2} \rangle
\]
Let us write the element of the Novikov ring corresponding to
$(l_1,l_2) \in \Lambda^\varnothing$ as $Q^{(l_1,l_2)}$.  The
$I$-function (that is, the $S$-extended $I$-function with $S =
\varnothing$) is:
\begin{multline*}
  I(t,x,z) = 
  z e^{(u_1 t_1 + u_2 t_2 + u_3 t_3 + u_4 t_4)/z} \\
  \times
  \sum_{\substack{(l_1,l_2) \in \ZZ \times \frac{1}{2}\ZZ:\\
      l_1 \geq 0, 3l_1+2l_2 \geq 0}}
  \frac{Q^{(l_1,l_2)} e^{(t_1+3t_2+t_3)l_1} e^{(2t_2+t_4)l_2}}
  {\prod_{\substack{\langle b\rangle = 0\\
        0 < b\leq l_1}} (u_1+bz)(u_3+bz)}
 \frac{  \fun_{\langle {-l_2} \rangle}
  }{\prod_{\substack{\langle b\rangle = 0\\
        0 < b\leq 3l_1+2l_2}}(u_2+bz)}
  \frac{\prod_{\substack{\langle b\rangle = \langle l_2 \rangle\\
        b\leq 0}} (u_4+bz)}
  {\prod_{\substack{\langle b\rangle = \langle l_2 \rangle\\
        b\leq l_2}} (u_4+bz)}
\end{multline*}
This is homogeneous of degree $1$ if we set $\deg t_1 = \deg t_2 =
\deg t_3 = \deg t_4 = 0$, $\deg z = 1$, and $\deg Q^{(l_1,l_2)} =
5l_1+3l_2$.  We therefore have:
\[
I(t,x,z) = z \fun_0 + t_1 u_1 \fun_0 + t_2 u_2 \fun_0 + t_3 u_3 \fun_0
+ t_4 u_4 \fun_0 - \textstyle \frac{1}{2} Q^{(1,{-\frac{3}{2}})}
e^{t_1+t_3-\frac{3}{2}t_4} \fun_{\frac{1}{2}} + O(z^{-1})
\]
and condition $\sharp$ fails.

\begin{rmk}
  The coarse moduli space $X$ of $\X$ is the ruled surface ${\mathbb
    F}_3$. Let $A$ and $B$ denote the natural divisors on ${\mathbb
    F}_3$, with $A$ the fibre and $B$ the negative section. Then $\X$
  can be interpreted as the moduli stack of square roots of $B$
  \cite[\S2]{Cadman}, \cite[Appendix~B]{AGV2}. The stack $\X$ contains
  a substack $\{x_4=0\}$ supported on $B$ and isomorphic to
  $\PP(2,2)$. In this context it is natural to identify the
  \emph{integral} Chow group $\Ch (\X,\ZZ)$ with the subring of
  $\Ch^\bullet (X,\QQ)$ multiplicatively generated by $A$ and $B/2$;
  the cycle class of $\PP(2,2)\subset \X$ is $B/2$. This gives an
  interpretation of the degrees $(l_1,l_2)$ occurring in the
  definition of $I(t,x,z)$.
\end{rmk}

\subsection{Example 8: $\PP^2$}
\label{sec:P2_Birkhoff}

There is a well-known closed formula~\cite{Givental:ICM} for the small
$J$-function of $\X = \PP^2$, that is, for the $J$-function
$J_\X(t,z)$ with $t \in H^2(\X)$.  We now show how to use an
$S$-extended $I$-function to obtain arbitrarily many terms of the
Taylor expansion of the big $J$-function of $\X$, that is, of the
$J$-function $J_\X(t,z)$ with $t \in H^\bullet(\X)$.  We use the
Birkhoff factorization procedure described in \cite[\S8]{cg}.  We
will compute the non-equivariant version of the $J$-function, as the
equivariant calculation is significantly more involved.

The variety $\X$ is the toric Deligne--Mumford stack associated to the
stacky fan $\bSigma = (N,\Sigma,\rho)$, where:
\[
\rho = 
\begin{pmatrix}
  -1 & 1 & 0 \\
  -1 & 0 & 1 
\end{pmatrix}
\colon \ZZ^3 \to N=\ZZ^2
\]
and $\Sigma$ is the complete fan in $N_\QQ \cong \QQ^2$ with rays
given by the columns of $\rho$. We have $\Box(\bSigma) = \{0\}$.
Consider the $S$-extended $I$-function where $S = \bigl\{ (0,0), (0,-1)
\bigr\}$ and the map $S \to N_\Sigma$ is the canonical inclusion.  The
$S$-extended fan map is:
\[
\rho^S =
\begin{pmatrix}
  -1 & 1 & 0 & 0 & 0 \\
  -1 & 0 & 1 & 0 & -1
\end{pmatrix}
\colon \ZZ^{3+2} \to N
\]
so that $\LL^S_\QQ\cong \QQ^3$ is identified as a subset of
$\QQ^{3+2}$ via the inclusion:
\[
\begin{pmatrix}
  l  \\
  k_0 \\
  k_1
\end{pmatrix}
\mapsto
\begin{pmatrix}
  1 & 0 & -1 \\
  1 & 0 & -1 \\
  1 & 0 & 0 \\
  0 & 1 & 0 \\
  0 & 0 & 1
\end{pmatrix}
\begin{pmatrix}
  l  \\
  k_0 \\
  k_1
\end{pmatrix}
\] 
The $S$-extended Mori cone is the positive octant. We see that
$\Lambda^S\subset \LL^S_\QQ$ is the lattice of vectors:
\[
\begin{pmatrix}
  l \\
  k_0 \\
  k_1
\end{pmatrix}
\quad \text{such that $l$,~$k_0$,~$k_1 \in \ZZ$}
\]
The reduction function $v^S$ is trivial.  Let $P \in H^2(\X)$ denote
the first Chern class of $\cO(1)$, and identify the Novikov ring
$\Novikov$ with $\CC[\![Q]\!]$ via the map that sends $d \in
H_2(\X;\ZZ)$ to $Q^{\int_d P}$.  The non-equivariant limit of the
$S$-extended $I$-function is:
\[
I^S_\non(t,x,z) = 
z e^{(t_1 + t_2 + t_3)P/z} 
\sum_{(l,k_0,k_1) \in \NN^3}
\frac{Q^l x_0^{k_0} x_1^{k_1} e^{(t_1+t_2+t_3)l}}{z^{k_0+k_1} k_0! k_1!}
\frac
{\prod_{b\leq 0} (P+bz)^2}
{\prod_{b\leq l-k_1} (P+bz)^2}
\frac{1}{\prod_{0<b\leq l}(P+bz)}
\]
This takes values in the non-equivariant cohomology ring
$H^\bullet(\X;\CC) = \CC[P]/(P^3)$.  It is homogeneous of degree $1$
if we set $\deg t_1 = \deg t_2 = \deg t_3 = 0$, $\deg x_0 = \deg z =
1$, $\deg x_1 = {-1}$, and $\deg Q = 3$.  
Note that, unlike the other examples in this paper, in this case the $I$-function contains
arbitrarily large positive powers of $z$; this reflects the fact that
some of the variables have negative degree.

We have:
\[
I^S_\non(t,x,z) = \textstyle z +\frac{1}{2} z x_1^2 P^2+ 
x_0 + (t_1+t_2+t_3) P + x_1 P^2 
 + \frac{1}{2} x_0 
x_1^2 P^2 + O(z^{-1}) + O(x_1^3) 
\]
The non-equivariant version of the mirror theorem for toric
Deligne--Mumford stacks \cite[Corollary~32]{ccit2} gives that
$I^S(t,x,-z) \in \sL^\non$, where $\sL^\non$ is the Givental
cone for non-equivariant Gromov--Witten theory (see e.g.~\cite[\S
3]{ccit}).  Set $t_2 = t_3 = 0$.  Condition $S$-$\sharp$ holds modulo
$x_1^2$, so:
\[
J_\X(x_0 + t_1 P + x_1 P^2,z) + O(x_1^2) = I^S_\non(t,x,z) + O(x_1^2)
\]
The following elements lie in $T_{I^S_\non(t,x,z)} \sL^\non$:
\begin{align*}
  & \frac{\partial I^S_\non}{\partial x_0} = e^{Pt_1/z} e^{x_0/z} 
\Big( 1 + O(z^{-2}) + O(x_1) \Big) \\
  & \frac{\partial I^S_\non}{\partial t_1} = e^{Pt_1/z} e^{x_0/z} 
\Big( P %+ z^{-2} Q e^{t_1} + 
   + O(z^{-2}) + O(x_1) \Big) \\
  & \frac{\partial I^S_\non}{\partial x_1} = 
  \textstyle e^{Pt_1/z} e^{x_0/z} 
\Big( P^2 + z^{-1} Q e^{t_1}  %- z^{-2} Q e^{t_1} P  +
  +O(z^{-2}) + O(x_1) \Big)
\end{align*} 
We have that:
\[
I^S_\non(t,x,z) = \textstyle e^{P t_1/z} e^{x_0/z} 
\Big(z + \frac{1}{2} z x_1^2 P^2 + x_1 P^2 
 + O(z^{-1}) +  O(x_1^3)  \Big)
\]
and general properties of $\sL^\non$ guarantee \cite{gi2}
\cite[Appendix~B]{ccit2} that:
\begin{multline*}
  I^S_\non(t,x,-z) 
  + C_0(t,x,z) z \frac{\partial I^S_\non}{\partial x_0}(t,x,{-z}) \\
  + C_1(t,x,z) z \frac{\partial I^S_\non}{\partial t_1}(t,x,{-z})
  + C_2(t,x,z) z \frac{\partial I^S_\non}{\partial x_1}(t,x,{-z})
  \in \sL^\non
\end{multline*}
for any $C_0$,~$C_1$,~$C_2$ depending polynomially on $z$.  
But:
\begin{align*}
  I^S_\non(t,x,z) - {\textstyle \frac{1}{2}} z x_1^2 
\frac{\partial I^S_\non}{\partial x_1}
  & = 
  \textstyle e^{P t_1/z} e^{x_0/z} \Big(z + x_1 P^2 - \frac{1}{2}
  x_1^2 Q e^{t_1} + 
  O(z^{-1})  + O(x_1^3) \Big)
  \\
  &= z + x_0  - \textstyle \frac{1}{2} x_1^2 Q e^{t_1} 
+ t_1 P + x_1 P^2 + O(z^{-1}) + O(x_1^3) 
\end{align*}
and thus:
\[
J_\X(\tau,z) + O(x_1^3) = 
I^S_\non(t,x,z) - {\textstyle \frac{1}{2}} x_1^2 z \frac{\partial I^S_\non}{\partial
  x_1}+ O(x_1^3) 
\]
where:
\[
\tau (x_0,t_1,x_1) = \big(x_0  - \textstyle \frac{1}{2} x_1^2 Q e^{t_1}\big)1 
+ t_1 P + x_1 P^2 + O(x_1^3)
\]
Inverting the mirror map $(x_0,t_1,x_1) \mapsto \tau$ gives a
closed-form expression for the big $J$-function $J_\X(a_0 + a_1 P +
a_2 P^2,z)$ to order $2$ in $a_2$.  One can repeat this procedure to
compute the big $J$-function $J_\X(a_0 + a_1 P + a_2 P^2,z)$ to
arbitrarily high order in $a_2$. 

\begin{rmk} 
The extended $I$-function is closely related to Barannikov's 
big quantum cohomology mirror for $\PP^n$ \cite{Barannikov:semiinf}: 
it satisfies the Picard--Fuchs differential equation for the mirror oscillatory 
integrals \cite[Example 4.15]{Iritani:QDM}. 
``Big'' mirror symmetry for $\PP^2$ has been also studied via 
tropical geometry~\cite{Gross} and via quasimap theory~\cite{Jinzenji-Shimizu,CF-Kim:bigI}. 
\end{rmk} 

\section{Twisted $I$-Functions}
\label{sec:twisted_I_functions}

Let $\X$ be the toric Deligne--Mumford stack defined by an
$S$-extended stacky fan $\bSigma$, as in \S\ref{sec:extended_stacky_fans}. 
Suppose that the coarse moduli space of $\X$ is semi-projective.  
Let $\varepsilon_1,\ldots,\varepsilon_r \in (\LL^S)^\vee$.  The canonical
inclusion $i\colon \LL \to \LL^S$ induces $i^\vee \colon (\LL^S)^\vee
\to \LL^\vee = \Pic(\X)$, and so the classes
$\varepsilon_1,\ldots,\varepsilon_r$ define line bundles
$\cE_1,\ldots,\cE_r$ over $\X$ via $i^\vee$.  Let $\cE = \cE_1 \oplus
\cdots \oplus \cE_r$, and let $\bc$ denote the invertible
multiplicative characteristic class
\begin{equation}
\label{eq:bc}
\bc({-}) = \exp\bigl(\textstyle \sum_{k=0}^\infty s_k \ch_k({-})\bigr)
\end{equation} 
where $s_0,s_1,\ldots$ are parameters.  We consider the Gromov--Witten
theory of $\X$ twisted, in the sense of \cite{cg,ts,ccit}, by the
vector bundle $\cE$ and the characteristic class $\bc$.  Let $\sL^\tw$
denote Givental's Lagrangian cone for $(\bc,\cE)$-twisted
Gromov--Witten theory, as in \cite[\S3]{ccit}.

Let $D_i \in H^2(\X;\CC)$, denote the (non-equivariant) class
Poincar\'e dual to the $i$th toric divisor for $1 \leq i \leq n$, and
the zero class for $n < i \leq n+|S|$.  Let $E_j \in H^2(\X;\CC)$, $1
\leq j \leq r$, denote the (non-equivariant) first Chern class of
$\cE_j$.  Let $\sL^\non$ denote Givental's Lagrangian cone for the
non-equivariant Gromov--Witten theory of $\X$, as in \cite[\S3]{ccit},
and let $I_\non^S(t,x,z)$ denote the non-equivariant limit of the
$S$-extended $\TT$-equivariant $I$-function $I^S(t,x,z)$. 

\begin{notation}
  Given parameters $s_0,s_1,s_2,\ldots$ as above, write $s(x) :=
  \sum_{k=0}^\infty s_k \frac{x^k}{k!}$.
\end{notation}

\begin{defn}[\protect{cf.~\cite[\S4]{ccit}}]
  Given $b \in \Box(\bSigma)$ and $\lambda \in \Lambda E^S_b$, define
  the \emph{modification factor}:
  \[
  M_{\lambda,b}(z) := \prod_{j=1}^r 
  \frac{
    \prod_{\substack{a : \langle a \rangle
        = \langle \varepsilon_j\cdot \lambda \rangle \\ 
   a \leq \varepsilon_j \cdot\lambda}} \exp\big( s(E_j + a z)\big)
    }
    {
      \prod_{\substack{a : \langle a \rangle
          = \langle \varepsilon_j \cdot \lambda \rangle \\a \leq 0}} \exp\big( s(E_j + a z) \big)
    }      
    \]
\end{defn}

\begin{defn}
  The \emph{$S$-extended $(\bc,\cE)$-twisted $I$-function of $\X$} is:
  \[
  I^S_{\bc,\cE}(t,x,z) = \sum_{b \in \Box(\bSigma)}
\sum_{\lambda \in \Lambda E^S_b} \tQ^\lambda
  I_{\lambda,b}(z) M_{\lambda,b}(z) y^b
  \]
  where $I_\non^S(t,x,z) = \sum_{b \in \Box(\bSigma)} \sum_{\lambda \in \Lambda E^S_b} \tQ^\lambda
  I_{\lambda,b}(z) y^b$.
\end{defn}

\begin{thm}
  \label{thm:twisted_I_is_on_the_twisted_cone}  
  With hypotheses and notation as above, we have that
  $I^S_{\bc,\cE}(t,x,-z)\in \sL^\tw$.
\end{thm}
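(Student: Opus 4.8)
The plan is to combine the untwisted mirror theorem (Theorem~\ref{thm:toric_mirror_theorem}, in the non-equivariant form used in \S\ref{sec:twisted_I_functions}) with the Quantum Riemann--Roch theorem of Coates--Givental \cite{cg}, in the orbifold version of Tseng \cite{ts,ccit}. The latter provides an explicit symplectic transformation $\Delta$, lying in the twisted loop group and hence commuting with multiplication by $z$, with the property that $\sL^\tw = \Delta\,\sL^\non$. Because the twist is by a direct sum $\cE = \cE_1\oplus\cdots\oplus\cE_r$ of line bundles and the multiplicative class $\bc$, the operator $\Delta$ acts component by component on the inertia stack: on $I\X_b$ it is multiplication by an explicit $z$-series $\Delta_b(z)$ assembled from $\exp\bigl(\sum_k s_k\,\ch_k(\cE_j)\bigr)$ and the ages of the $\cE_j$ along $I\X_b$. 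Since $I^S_\non(t,x,-z)\in\sL^\non$ by Theorem~\ref{thm:toric_mirror_theorem}, the point $\Delta\,I^S_\non(t,x,-z)$ lies in $\sL^\tw$ for free. The difficulty is that $\Delta$ is independent of the curve class, so $\Delta\,I^S_\non$ multiplies the $\lambda$-summand supported on $I\X_b$ by the factor $\Delta_b(-z)$, which depends only on $b$; whereas $I^S_{\bc,\cE}(t,x,-z)$ multiplies it by $M_{\lambda,b}(-z)$, which depends genuinely on $\lambda$.

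The first main step is therefore to compare $M_{\lambda,b}$ with $\Delta_b$. Here I would use the Bernoulli-number computation underlying the proof of Quantum Riemann--Roch \cite[\S\S3--4]{cg,ccit}: since $\langle\varepsilon_j\cdot\lambda\rangle$ is constant on $\Lambda E^S_b$, the two infinite products defining $M_{\lambda,b}$ share all but finitely many factors, and the regularised ($\lambda$-independent) part of the quotient is exactly the sectorwise germ $\Delta_b$. This identifies a factorisation $M_{\lambda,b}(z) = \Delta_b(z)\,R_{\lambda,b}(z)$, in which the discrepancy $R_{\lambda,b}(z)$ is the genuinely $\lambda$-dependent finite product $\prod_j\prod_{0<a\le\varepsilon_j\cdot\lambda,\ \langle a\rangle = \langle\varepsilon_j\cdot\lambda\rangle}\exp\bigl(s(E_j+az)\bigr)$, with the reciprocal product when $\varepsilon_j\cdot\lambda<0$. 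The content of this step is that all of the ``transcendental'' (Bernoulli) part of $M_{\lambda,b}$ is accounted for by $\Delta$, and that the remaining $\lambda$-dependence is of hypergeometric type.

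The second main step is to show that the term-wise modification by $R_{\lambda,b}$ carries $\Delta\,I^S_\non(t,x,-z)\in\sL^\tw$ to another point of $\sL^\tw$, namely $I^S_{\bc,\cE}(t,x,-z)$. For this I would use the overruled structure of the Givental cone already exploited in \S\ref{sec:P2_Birkhoff}: for a family $g(t,x,z)$ of points of $\sL^\tw$, every combination $g + \sum_i C_i(z)\,z\,\partial_{t_i} g + \sum_j C'_j(z)\,z\,\partial_{x_j} g$ with $C_i$, $C'_j$ polynomial in $z$ again lies in $\sL^\tw$, because the tangent vectors span a $\CC[z]$-submodule $L$ with $zL\subset\sL^\tw$. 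Differentiating the family $\Delta\,I^S_\non$ with respect to $t_i$ multiplies the $\lambda$-summand on $I\X_b$ by $\bigl(D_i + z\,(D_i\cdot\lambda)\bigr)$; since each $E_j$ is an integral combination of the $D_i$ and $\varepsilon_j\cdot\lambda$ the corresponding combination of the $D_i\cdot\lambda$, suitable $\CC[z]$-linear combinations of these derivatives produce the individual factors of $R_{\lambda,b}$, and an induction on the degrees $\varepsilon_j\cdot\lambda$ builds up the full discrepancy. The extended variables $x_j$ supply the directions needed to reach the sectors $b\neq0$.

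I expect the second step to be the main obstacle, and it is precisely the point advertised in the introduction: the passage from $J$-functions to arbitrary $I$-functions. For a $J$-function the special normalisation ${-z} + \tau + O(z^{-1})$ lets one fold the polynomial factors into the mirror map and dispatch them using the string and divisor equations, so that only $\Delta$ is visible; for a general $I$-function no such normalisation is available, and one must genuinely realise the $\lambda$-dependent modification through the ruling of the cone. The delicate points are to organise the induction on degree so that the lower factors $(E_j+az)$ with $a<\varepsilon_j\cdot\lambda$ are supplied by the lower-degree summands of the \emph{same} family, and to check that the derivatives of $\Delta\,I^S_\non$ in the available parameters $t_i$ and $x_j$ span enough tangent directions; once this is arranged the argument follows the template of \cite{cg,ccit}, with the toric geometry entering only through the explicit shape of $z\,\partial_{t_i}I^S_\non$ and $z\,\partial_{x_j}I^S_\non$.
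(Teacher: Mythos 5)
Your overall architecture --- Tseng's quantized transformation $\Delta$ with $\Delta(\sL^\non)=\sL^\tw$, the untwisted mirror theorem for $I^S_\non$, and the ruled structure of the cone plus a divisor-type equation to absorb the $\lambda$-dependence --- matches the paper's proof, but your first step contains a genuine error that would derail everything downstream. The two infinite products defining $M_{\lambda,b}$ cancel \emph{exactly}: every factor with $a\leq\min(0,\varepsilon_j\cdot\lambda)$ is literally common to numerator and denominator, so $M_{\lambda,b}(z)$ \emph{equals} the finite product $\prod_j\prod_{0<a\leq\varepsilon_j\cdot\lambda,\ \langle a\rangle=\langle\varepsilon_j\cdot\lambda\rangle}\exp\bigl(s(E_j+az)\bigr)$ (reciprocal when $\varepsilon_j\cdot\lambda<0$), with no regularisation needed and no anomaly available to produce a sectorwise germ. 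Your claimed factorisation $M_{\lambda,b}=\Delta_b\,R_{\lambda,b}$, with $R_{\lambda,b}$ equal to that same finite product, therefore forces $\Delta_b=1$ and is false. The correct identity, which the paper derives from $G_y(x,z)=G_0(x+yz,z)$ and $G_0(x+z,z)=G_0(x,z)+s(x)$, is
\[
M_{\lambda,b}(-z)=\Delta_b(z)\prod_{j=1}^r\exp\Bigl(-G_0\bigl(E_j-(\varepsilon_j\cdot\lambda)z,\,z\bigr)\Bigr),
\qquad
\Delta_b(z)=\prod_{j=1}^r\exp\bigl(G_{f(b,j)}(E_j,z)\bigr),
\]
so the genuinely $\lambda$-dependent discrepancy is the transcendental $G_0$-series, not a finite hypergeometric product: the finite product is the \emph{whole} of $M_{\lambda,b}$, split into two Bernoulli-type pieces, of which $\Delta_b$ is one. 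Concretely, if your Step 2 induction succeeded in planting your $R_{\lambda,b}=M_{\lambda,b}$ on the summands of $\Delta I^S_\non(t,x,-z)$, the output would carry the multipliers $\Delta_b(z)M_{\lambda,b}(-z)$, i.e.\ it would be $\Delta$ applied to $I^S_{\bc,\cE}(t,x,-z)$, and its membership in $\sL^\tw$ would assert $I^S_{\bc,\cE}(t,x,-z)\in\sL^\non$ --- not the theorem. Your two transcendental pieces do not cancel; they compound.

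The repair is also where the paper departs from your Step 2: no induction on $\varepsilon_j\cdot\lambda$, no recursion through lower-degree summands, and no spanning of tangent directions is required. Since $z\nabla_{\varepsilon_j}$ acts on the $\lambda$-summand of $I^S_\non$ as multiplication by $E_j+(\varepsilon_j\cdot\lambda)z$ --- the divisor equation for the extended $I$-function, with the extended variables entering through the $x_i\,\partial/\partial x_i$ terms of $\nabla_{\varepsilon_j}$, which is the one genuinely new input beyond \cite{ccit} --- the entire discrepancy is realised in one stroke as the operator $\prod_j\exp\bigl(-G_0(-z\nabla_{\varepsilon_j},z)\bigr)$ applied to $I^S_\non(t,x,-z)\in\sL^\non$; because $s_{-1}=0$, order by order in the parameters $s_k$ this operator is built from $z\nabla_{\varepsilon_j}$ with coefficients polynomial in $z$, and the argument of \cite[Theorem~4.8]{ccit} (the same cone properties you invoke from \S\ref{sec:P2_Birkhoff}) shows it preserves $\sL^\non$; applying $\Delta$ afterwards lands in $\sL^\tw$. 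Finally, your framing that for $J$-functions ``only $\Delta$ is visible'' is not accurate: the hypergeometric modification in \cite[Theorem~4.8]{ccit} is likewise degree-dependent, and the point of the paper's proof is that exactly the same two ingredients --- membership in the cone and a divisor-type equation --- carry over verbatim from $J$-functions to $I$-functions.
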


\begin{proof}
  Recall from the proof of Theorem~4.8 in \cite{ccit} that:
  \[
  G_y(x,z) := \sum_{l=0}^\infty \sum_{m=0}^\infty 
s_{m+l-1} \frac{B_m(y)}{m!}
  \frac{x^l}{l!} z^{m-1}
  \]
  satisfies:
  \begin{equation}
    \label{eq:G_identities}
    \begin{aligned}
      & G_y(x,z) = G_0(x+yz,z) \\
      & G_0(x+z,z) = G_0(x,z) + s(x)
    \end{aligned}
  \end{equation}
  Here $B_m(y)$ is the $m$th Bernoulli polynomial, and $s_{-1}$ is
  defined to be zero.  Thus:
  \begin{align*}
    M_{\lambda,b}(-z) & = 
    \prod_{j=1}^r
    \exp\Bigg(
    \sum_{\substack{a : \<a\> = \<\epsilon_j \cdot \lambda\> \\  a \leq \epsilon_j\cdot
      \lambda}}
    s(E_j - a z) 
    -
    \sum_{\substack{a : \<a\> = \<\epsilon_j \cdot \lambda\> \\  a \leq 0}}
    s(E_j - a z) 
    \Bigg) \\
    & = \prod_{j=1}^r \exp\Big(
    G_0\big(E_j + \<{-\varepsilon_j\cdot \lambda}\>z,z\big)
    -
    G_0\big(E_j - (\varepsilon_j\cdot\lambda)z,z\big)
    \Big) \\
    &= \prod_{j=1}^r
    \exp\Big(
    G_{\<{-\varepsilon_j\cdot \lambda}\>}(E_j,z)
    -
    G_0\big (E_j - (\varepsilon_j\cdot\lambda)z,z\big)
    \Big) 
  \end{align*}
  where for the last two equalities we used \eqref{eq:G_identities}.
  
  Let $b \in \Box(\bSigma)$, and let $f(b,j) \in [0,1)$ be the
  rational number such that if $(x,g) \in I\X_b$, then $g$ acts on the
  fiber of $\cE_j$ over $x \in \X$ by multiplication by $\exp\big(2
  \pi \sqrt{-1} f(b,j)\big)$.  
  Note that if $\lambda \in \Lambda E^S_b$ then $f(b,j) =
  \<{-\varepsilon_j\cdot\lambda}\>$. 
  Tseng has proven \cite{ts} that the operators: 
  \[
  \Delta_j := \bigoplus_{b \in \Box(\bSigma)}
  \exp\Big(G_{f(b,j)}(E_j,z)\Big)
  \]
  and $\Delta := \prod_{j=1}^r \Delta_j$ satisfy $\Delta(\sL^\non) =
  \sL^\tw$; the direct sum in the definition of $\Delta_j$ here
  reflects the decomposition $H^\bullet_\CR(\X;\CC) = \bigoplus_{b \in
    \Box(\bSigma)} H^{\bullet - \age(b)}(I\X_b;\CC)$.  To show that
  $I^S_{\bc,\cE}(t,x,-z)$ lies in $\sL^\tw$, therefore, it suffices to
  show that:
  \begin{equation}
    \label{eq:look_at_this}
    \sum_{b \in \Box(\bSigma)} \sum_{\lambda \in \Lambda E^S_b}
    \tQ^\lambda I_{\lambda,b}
    \prod_{j=1}^r
    \exp\Big( {- G_0\big (E_j - (\varepsilon_j\cdot\lambda)z,z\big)} \Big)
    y^b
  \end{equation}
  lies in the cone $\sL^\non$ for the untwisted theory.  
Recall the splitting $\LL^S\otimes \QQ \cong (\LL \otimes \QQ) \oplus \QQ^m$ 
from \eqref{eq:LS_splitting}. 
Under this splitting, $\varepsilon_j\in (\LL^S)^\vee$ induces 
$E_j \in \LL^\vee \otimes \QQ \cong H_2(\X;\QQ)$ and 
$(f_{j1},\dots,f_{jm}) \in \QQ^m$. 
Choose $e_{ji}\in \QQ$ such that $E_j = \sum_{i=1}^n e_{ji} D_i$ 
and define the differential operator $\nabla_{\varepsilon_j}$ by 
\[
\nabla_{\varepsilon_j} = e_{j1} \frac{\partial}{\partial t_1} + 
\cdots + e_{j n} \frac{\partial}{\partial t_n} + 
f_{j1} x_1 \frac{\partial}{\partial x_1} + \cdots 
+ f_{jm} x_m \frac{\partial}{\partial x_m} 
\]
Then 
  \eqref{eq:look_at_this} is:
  \begin{equation}
    \label{eq:its_equal_to_this}
    \prod_{j=1}^r
    \exp\Big( {- G_0\big ({-z} \nabla_{\varepsilon_j},z\big)} \Big)
    I_\non^S(t,x,{-z})
  \end{equation}
and we know by the non-equivariant version of
  the mirror theorem for toric Deligne--Mumford stacks
  \cite[Corollary~32]{ccit2}, \cite{ccfk} that $I^S_\non(t,x,{-z}) \in
  \sL^\non$. Arguing as in the proof of \cite[Theorem~4.8]{ccit} now
  shows that:
  \[
  \prod_{j=1}^r
  \exp\Big( {- G_0\big ({-z} \nabla_{\varepsilon_j},z\big)} \Big)
  I_\non^S(t,x,{-z}) \in \sL^\non
  \]
  as required.
\end{proof}

\begin{rmk}
  Theorem~\ref{thm:twisted_I_is_on_the_twisted_cone}, roughly
  speaking, states that a certain hypergeometric modification of the
  untwisted $I$-function $I^S_\non$ lies on the twisted cone
  $\sL^\tw$.  The proof of
  Theorem~\ref{thm:twisted_I_is_on_the_twisted_cone} is essentially
  the same as the proof of Theorem~4.8 in \cite{ccit}, where we showed
  that a hypergeometric modification of the untwisted $J$-function
  $J_\X$ lies on $\sL^\tw$.  The essential properties of the
  $J$-function $J_\X$ used there are that $J_\X(t,{-z}) \in \sL^\non$
  (which holds by definition) and the Divisor Equation
  \cite[Lemma~4.7(3)]{cclt}.  The essential properties of the
  $I$-function $I^S_\non$ used here are that $I^S_\non(x,t,{-z}) \in
  \sL^\non$ (our mirror theorem) and that $\nabla_{\varepsilon_j}
  I^S_\non(t,x,z) = \big(E_j + (\varepsilon_j\cdot\lambda)z\big)
  I^S_\non(x,t,z)$.  This latter property, which is a version of the
  Divisor Equation for the $I$-function, allows us to replace
  \eqref{eq:look_at_this} by \eqref{eq:its_equal_to_this}.
\end{rmk}

\begin{rmk}
  With a little extra effort --- modifying the formal setup in
  \cite{ccit} to include equivariant parameters --- one could prove
  the $\TT$-equivariant analog of
  Theorem~\ref{thm:twisted_I_is_on_the_twisted_cone}
  %and Corollary~\ref{cor:ci_mirror} 
  in exactly the same way.  We omit this
  here, however, as we know of no applications of these results.  In
  current applications one either treats toric complete intersections
  by taking $\bc$ to be the $S^1$-equivariant Euler class $\be$ 
(see \S \ref{sec:ci_examples}), 
  in which case the $\TT$-action on the ambient
  space is irrelevant as it does not preserve the complete
  intersection, or one treats non-compact geometries by taking $\bc =
  \be^{-1}$ to be the $S^1$-equivariant inverse Euler class.  The
  latter case can be treated directly using
  Theorem~\ref{thm:toric_mirror_theorem}, as the total space of $\cE$
  is itself a toric Deligne--Mumford stack.
\end{rmk}

\section{A Mirror Theorem for Toric Complete Intersection Stacks}
\label{sec:ci_examples}

We now describe how, under appropriate hypotheses on the vector bundle
$\cE = \cE_1 \oplus \cdots \oplus \cE_r$ 
and the toric Deligne--Mumford stack $\X$, 
Theorem \ref{thm:twisted_I_is_on_the_twisted_cone} gives 
a mirror theorem for the complete 
intersection stack $\Y$ cut out by a generic section of $\cE$.
Suppose that the vector bundle $\cE$ is \emph{convex}, that is, for 
every genus-zero stable map $f\colon (\cC,x_1,\dots,x_n) \to \X$ 
from a pointed orbicurve $(\cC,x_1,\dots,x_n)$, 
one has $H^1(\cC,f^*\cE) = 0$. 
This is a very restrictive assumption: it is equivalent to requiring that:
\begin{description} 
\item[(positivity)] $c_1(\cE_j) \cdot d\ge 0$ 
for every degree $d$ of a genus-zero stable map; and

\item[(coarseness)] $\cE_j$ is the pull-back of a line bundle on 
the coarse moduli space of $\X$;
\end{description} 
hold for all $j=1,\dots,r$. 
See \cite{qlhp} for a detailed discussion of convexity for vector
bundles on orbifolds. 
Under these conditions, we may choose $\varepsilon_j \in (\LL^S)^\vee$ 
in \S \ref{sec:twisted_I_functions} so that it vanishes on the 
vectors in \eqref{eq:splitting_vector}, i.e.~$\varepsilon_j$ corresponds to 
$(\text{the class of $\cE_j$},0)$ under the splitting $(\LL^S)^\vee \otimes \QQ 
\cong (\LL^\vee \otimes \QQ) \oplus \QQ^m = (\Pic(\X)\otimes\QQ) 
\oplus \QQ^m$ induced by \eqref{eq:LS_splitting}. 
In fact, the coarseness implies that the pairings of $\varepsilon_j$ 
with the vectors in \eqref{eq:splitting_vector} lie in $\ZZ$, 
and in view of the exact sequence: 
\[
\begin{CD} 
0@>>> (\ZZ^m)^* @>>> (\LL^S)^\vee @>>> \LL^\vee @>>> 0
\end{CD}
\]
one may always shift $\varepsilon_j$ by the action of $(\ZZ^m)^*$ 
so that these pairings vanish. 
We take the characteristic class $\bc$ in \eqref{eq:bc} to be 
the $S^1$-equivariant Euler class $\be$: 
\[
\be(\cV) = \prod_{\text{$v$: Chern roots of $\cV$}} (\kappa + v)
\]
where we consider the fiberwise $S^1$-action on 
vector bundles and $\kappa$ denotes the $S^1$-equivariant 
parameter.  
This corresponds to the choice of parameters 
\begin{align*} 
s_k &= \begin{cases} 
\log \kappa & \text{for $k=0$} \\  
(-1)^{k-1} (k-1)! \kappa^{-k} & \text{for $k\ge 1$} 
\end{cases} 
\end{align*} 
With the convexity hypothesis for $\cE$ 
and the choices for $\varepsilon_j$ as above,
the $(\be,\cE)$-twisted $I$-function takes the form: 
\begin{multline*} 
I^S_{\be,\cE}(t,x,z) = z e^{\sum_{i=1}^n t_i D_i/z} \times \\
\sum_{b\in \Box(\bSigma)} 
\sum_{\lambda \in \Lambda E^S_b} 
\tQ^\lambda e^{\lambda t} 
\left( 
      \prod_{i=1}^{n+m}
      \frac{\prod_{\<a\>=\< \lambda_i \>, a \leq 0}(D_i+a z)}
      {\prod_{\<a \>=\< \lambda_i \>, a\leq \lambda_i} (D_i+a z)} 
\right) 
\left(
\prod_{i=1}^r 
\prod_{a=1}^{E_j \cdot d} 
(\kappa + E_j + a z) 
\right)
    y^b  
\end{multline*}
where we write $\lambda = (d,k)$ via \eqref{eq:LS_splitting}. 
Note that $E_j\cdot d \in \ZZ_{\ge 0}$ by the convexity 
assumption. 
Let $i^\star\colon H_\CR^\bullet(\X) \to H_\CR^\bullet(\Y)$ 
denote the pullback along the inclusion $i\colon \Y \to \X$, 
and define:
\[
I^S_{\Y}(t,x,z) = \lim_{\kappa \to 0}i^\star I^S_{\be,\cE}(t,x,z)
\]
Theorem~\ref{thm:twisted_I_is_on_the_twisted_cone} 
implies that $I^S_{\be,\cE}(t,x,-z)$ lies in the 
$(\be,\cE)$-twisted cone $\sL^\tw$.  Combining this with functoriality for the virtual fundamental class~\cite[\S2]{Pandharipande:afterGivental}, \cite{kkp} either as in the argument of~\cite[Proposition~2.4]{Iritani:periods} or using \cite[Theorem~1.1 and Remark~2.2]{Coates:cone}, proves the following Mirror Theorem for complete intersections in toric Deligne--Mumford stacks.

\begin{thm} 
\label{thm:ci_mirror} 
Let $\X$ be a toric Deligne--Mumford stack with semi-projective 
coarse moduli space, and let $S$ be a finite set 
equipped with a map $S \to N_\Sigma$ 
as in Theorem \ref{thm:toric_mirror_theorem}.  
Let $\cE = \cE_1 \oplus \cdots \oplus \cE_r$ be the sum 
of convex line bundles over $\X$ 
and $\Y \subset \X$ be the zero-locus of a transverse section of $\cE$. 
Then $I_\Y^S(t,x,-z)$ lies in Givental's Lagrangian submanifold $\sL_\Y$ for $\Y$. 
\end{thm}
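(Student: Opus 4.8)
The plan is to transport the membership $I^S_{\be,\cE}(t,x,-z)\in\sL^\tw$, supplied by Theorem~\ref{thm:twisted_I_is_on_the_twisted_cone}, across the operator $\lim_{\kappa\to 0} i^\star$ from the $(\be,\cE)$-twisted theory of $\X$ to the untwisted theory of $\Y$. The conceptual input is the classical comparison, valid because $\cE$ is convex, between $\be$-twisted Gromov--Witten invariants of $\X$ and ambient Gromov--Witten invariants of $\Y$; the technical input is that this comparison is strong enough to match not merely individual invariants but whole Lagrangian cones.

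First I would recall the geometric source of the comparison. Convexity of $\cE$ guarantees that in genus zero the sheaf $\cE_{0,n,d}:=\pi_\star\ev^\star\cE$ on $\Mbar_{0,n}(\X,d)$ is a vector bundle, with $R^1\pi_\star\ev^\star\cE=0$. A transverse section of $\cE$ induces a section of $\cE_{0,n,d}$ whose zero locus is exactly $\Mbar_{0,n}(\Y,d)$, and functoriality for the virtual fundamental class~\cite[\S2]{Pandharipande:afterGivental},\cite{kkp} gives
\[
j_\star[\Mbar_{0,n}(\Y,d)]^{\mathrm{vir}} = e(\cE_{0,n,d})\cap[\Mbar_{0,n}(\X,d)]^{\mathrm{vir}},
\]
where $j\colon\Mbar_{0,n}(\Y,d)\hookrightarrow\Mbar_{0,n}(\X,d)$ is the inclusion and $e$ the ordinary Euler class. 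Since $\be$ specialises to $e$ as $\kappa\to 0$, this identifies the $\kappa\to 0$ limit of the $(\be,\cE)$-twisted invariants of $\X$, evaluated on ambient classes $i^\star\alpha_i$, with the genus-zero Gromov--Witten invariants of $\Y$ evaluated on $\alpha_i$. One checks separately that the limit exists: convexity forces $E_j\cdot d\in\ZZ_{\ge 0}$, so the twisting factors $\prod_a(\kappa+E_j+az)$ are regular at $\kappa=0$ and introduce no poles.

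Next I would upgrade this invariant-by-invariant statement to a statement about cones. The operator $\lim_{\kappa\to 0} i^\star$ sends the state space of the twisted theory to that of $\Y$, and the identity above says precisely that it intertwines the two collections of correlators. This is exactly the hypothesis needed to apply either \cite[Proposition~2.4]{Iritani:periods} or \cite[Theorem~1.1 and Remark~2.2]{Coates:cone}, which conclude that $\lim_{\kappa\to0} i^\star$ carries the twisted cone $\sL^\tw$ into the cone $\sL_\Y$. Feeding the point $I^S_{\be,\cE}(t,x,-z)\in\sL^\tw$ through this map, and recalling the definition $I^S_\Y(t,x,z)=\lim_{\kappa\to 0} i^\star I^S_{\be,\cE}(t,x,z)$, then yields $I^S_\Y(t,x,-z)\in\sL_\Y$, as required.

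The main obstacle is this last upgrade, together with the ambient subtlety it conceals. Matching every invariant does not \emph{a priori} locate a single power series on the cone, because $\sL_\Y$ is cut out by conditions that are nonlinear over the Novikov ring and lives in $H^\bullet_{\CR}(\Y)$ rather than in the image of $i^\star$; what one genuinely controls is the ambient part of $\sL_\Y$. The force of the cited functoriality theorems is that the comparison of virtual classes is compatible with the whole Givental-symplectic structure defining the cone, so that membership --- and not just equality of correlators --- is preserved under $\lim_{\kappa\to0} i^\star$. Verifying that compatibility in the orbifold setting, where $i^\star$ may fail to be surjective and the $\kappa\to 0$ limit must be taken with care, is the crux of the argument.
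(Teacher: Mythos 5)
Your proposal follows essentially the same route as the paper: Theorem~\ref{thm:twisted_I_is_on_the_twisted_cone} places $I^S_{\be,\cE}(t,x,-z)$ on $\sL^\tw$, and the paper likewise concludes by combining this with functoriality for the virtual fundamental class~\cite[\S2]{Pandharipande:afterGivental}, \cite{kkp}, deferring the cone-level comparison (including the $\kappa\to 0$ limit and the ambient-class subtlety you flag) to exactly the results you cite, namely \cite[Proposition~2.4]{Iritani:periods} or \cite[Theorem~1.1 and Remark~2.2]{Coates:cone}. Your write-up correctly identifies both the quantum Lefschetz input and the cone-membership upgrade as the two ingredients, so it is a faithful, somewhat more detailed rendering of the paper's own proof.
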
 

\begin{rmk} 
We chose $\varepsilon_j$ so that it vanishes on the vectors 
\eqref{eq:splitting_vector}: this choice yields the optimal 
$z^{-1}$-asymptotics for $I_\Y^S$. 
In order for $I_{\be,\cE}^S$ 
to have a well-defined non-equivariant limit $\kappa \to 0$, 
we only need to assume that $\varepsilon_j$ pairs with the vectors \eqref{eq:splitting_vector} 
non-negatively, and Theorem \ref{thm:ci_mirror} is still valid under this 
weaker assumption. 
When the pairings of $\varepsilon_j$ with the vectors \eqref{eq:splitting_vector} 
are positive, the corresponding $I$-function $I^S_\Y$ has worse $z^{-1}$-asymptotics, 
i.e.~contains higher powers in $z$. 
\end{rmk} 

\begin{rmk} 
  Theorem~\ref{thm:ci_mirror} can also be proved by combining the methods of
  Cheong--Ciocan-Fontanine--Kim~\cite{ccfk} with the methods of~\cite[\S7]{cfk}.  
  This gives a different approach, which relies on virtual localization rather than the quantum Lefschetz theorem.
\end{rmk}

Suppose that the $I$-function $I^S_{\Y}$ has the following 
asymptotics (cf.~Condition $S$-$\sharp$ in \S \ref{sec:sharp})
\begin{equation}
  \label{eq:good_asymptotics}
  I^S_{\Y}(t,x,z) = F(t,x) z + G(t,x) + O(z^{-1})
\end{equation}
where $F$ is an $H^0(\Y)$-valued function 
and $G$ is an $H_{\CR}^\bullet(\Y)$-valued function. 
This holds, for example, if the following conditions are met: 
\begin{itemize} 
\item $c_1(T\X) - c_1(\cE)$ is nef, and 
\item the image of 
$S\to N_\Sigma$ is contained in 
$\{b \in N_\Sigma: \age(b)\le 1\}$. 
% (see \S \ref{sec:CR} for the age of $b$).  
\end{itemize} 
Define the mirror map by:
\[
\tau(t,x) = \frac{G(t,x)}{F(t,x)}
\]
Theorem~\ref{thm:ci_mirror} determines the unique point 
$F(t,x)^{-1} I^S_{\Y}(t,x,{-z})$ on $\sL_\Y$ of the form 
${-z} + \tau + O(z^{-1})$: this is the $J$-function $J_\Y(\tau,-z)$ for $\Y$. 
Thus we obtain the following mirror theorem. 
\begin{cor} 
\label{cor:ci_mirrorthm}
With hypotheses and notation as above, we have:
\[
J_{\Y}(\tau(t,x),z) = \frac{I_\Y^S(t,x,z) }{F(t,x)} 
\]
\end{cor}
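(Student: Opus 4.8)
The plan is to deduce the corollary directly from Theorem~\ref{thm:ci_mirror} together with two structural properties of Givental's Lagrangian submanifold $\sL_\Y$: that it is a cone, and that it is characterized near its origin by the $J$-function in the manner recalled in the introduction (applied to $\Y$ in place of $\X$). The argument is essentially the one sketched in the paragraph preceding the corollary, made into a self-contained proof.

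First I would invoke Theorem~\ref{thm:ci_mirror} to obtain $I^S_\Y(t,x,-z)\in\sL_\Y$. Since $\sL_\Y$ is a Givental cone, it is invariant under rescaling by invertible functions of the parameters. As $F(t,x)$ takes values in $H^0(\Y)$ and has invertible leading term, the scalar $F(t,x)^{-1}$ is well-defined, and therefore $F(t,x)^{-1} I^S_\Y(t,x,-z)\in\sL_\Y$.

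Next I would read off the $z$-asymptotics. Substituting $z\mapsto -z$ into the assumed expansion \eqref{eq:good_asymptotics} gives $I^S_\Y(t,x,-z) = -F(t,x)\,z + G(t,x) + O(z^{-1})$, so that
\[
F(t,x)^{-1} I^S_\Y(t,x,-z) = -z + \frac{G(t,x)}{F(t,x)} + O(z^{-1}) = -z + \tau(t,x) + O(z^{-1}).
\]
Thus $F(t,x)^{-1} I^S_\Y(t,x,-z)$ is a point of $\sL_\Y$ of the form $-z + \tau + O(z^{-1})$. By the uniqueness characterization of the $J$-function recalled in the introduction, this point must equal $J_\Y(\tau(t,x),-z)$. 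Replacing $z$ by $-z$ then yields $J_\Y(\tau(t,x),z) = I^S_\Y(t,x,z)/F(t,x)$, as claimed.

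The steps above are formal once the inputs are in place, so the only real care needed is in their formal-geometric interpretation. The main point to verify is that division by $F$ is legitimate: one must check that $\sL_\Y$ is genuinely invariant under multiplication by $F(t,x)^{-1}$ in the precise sense of formal geometry (cf.~the remark following Theorem~\ref{thm:toric_mirror_theorem}), which amounts to knowing that $F$ lies in the appropriate ring of invertible functions and that the cone property holds over this ring. Granting this, together with the fact that $J_\Y(\tau,-z)$ is the unique point of $\sL_\Y$ of the stated shape, the corollary follows.
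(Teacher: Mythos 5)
Your proof is correct and follows essentially the same route as the paper: the paper's own argument (given in the paragraph preceding the corollary) likewise applies Theorem~\ref{thm:ci_mirror}, uses the cone property of $\sL_\Y$ to rescale by $F(t,x)^{-1}$, and identifies the resulting point of the form ${-z}+\tau+O(z^{-1})$ with $J_\Y(\tau,{-z})$ via the uniqueness characterization of the $J$-function. Your added care about the formal-geometric legitimacy of dividing by $F$ is a reasonable elaboration of a step the paper leaves implicit, not a deviation.
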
 

\begin{rmk}
  In general the $(\be,\cE)$-twisted $I$-function will not satisfy
  \eqref{eq:good_asymptotics}, but one can still obtain the
  $(\be,\cE)$-twisted $J$-function by Birkhoff factorization as in
  \S\ref{sec:P2_Birkhoff}.  Provided that the bundle $\cE$ is convex,
  this allows the computation of genus-zero Gromov--Witten invariants
  of $\Y$.
\end{rmk}

\begin{rmk}
  If $\cE$ is not convex then the relationship between
  $(\be,\cE)$-twisted Gromov--Witten invariants of $\X$ and
  Gromov--Witten invariants of $\Y$ is not well understood
  \cite{qlhp}.  This merits further investigation.
\end{rmk}

\begin{rmk} Certain components 
of $I_\Y^S$ can be written as (exponential) periods of 
the Landau--Ginzburg model mirror to $\Y$: see \cite{Iritani:periods}.  
\end{rmk}

\subsection{Example 9: a sextic hypersurface in $\PP(1,1,1,3,3)$}

Let the orbifold $\Y$ be a smooth sextic hypersurface in $\X =
\PP(1,1,1,3,3)$; this is a Fano \mbox{$3$-fold} with canonical
singularities.  The ambient space $\X$ is the toric Deligne--Mumford
stack associated to the stacky fan $\bSigma = (N,\Sigma,\rho)$, where:
\[
\rho = 
\begin{pmatrix}
  -1 & 1  & 0 & 0 & 0 \\
  -1 & 0 & 1 & 0 & 0 \\
  -3 & 0 & 0 & 1 & 0 \\
  -3 & 0 & 0 & 0 & 1
\end{pmatrix}
\colon \ZZ^5 \to N=\ZZ^4
\]
and $\Sigma$ is the complete fan in $N_\QQ \cong \QQ^4$ with rays
given by the columns $\rho_1,\ldots,\rho_5$ of $\rho$.  We identify
$\Box(\bSigma)$ with the set $\bigl\{0,\frac{1}{3},\frac{2}{3}\bigr\}$
via the map $\kappa \colon x \mapsto x(\rho_1+\rho_2+\rho_3)$.
Consider the $S$-extended $I$-function where
$S=\bigl\{0,\frac{1}{3}\bigr\}$ and $S \to N_\Sigma$ is the map
$\kappa$.  The $S$-extended fan map is:
\[
\rho^S =
\begin{pmatrix}
  -1 & 1  & 0 & 0 & 0 & 0 & 0 \\
  -1 & 0 & 1 & 0 & 0 & 0 & 0\\
  -3 & 0 & 0 & 1 & 0 & 0 & -1\\
  -3 & 0 & 0 & 0 & 1 & 0 & -1
\end{pmatrix}
\colon \ZZ^{5+2} \to N
\]
so that $\LL^S_\QQ\cong \QQ^3$ is identified as a subset of
$\QQ^{5+2}$ via the inclusion:
\[
\begin{pmatrix}
  l  \\
  k_0\\
  k_1
\end{pmatrix}
\mapsto
\begin{pmatrix}
  \frac{1}{3} & 0 & {-\frac{1}{3}} \\
  \frac{1}{3} & 0 & {-\frac{1}{3}} \\
  \frac{1}{3} & 0 & {-\frac{1}{3}} \\
  1 & 0 & 0 \\
  1 & 0 & 0 \\
  0 & 1 & 0 \\
  0 & 0 & 1
\end{pmatrix}
\begin{pmatrix}
  l  \\
  k_0\\
  k_1\\
\end{pmatrix}
\]
The $S$-extended Mori cone is the positive octant.  We see that
$\Lambda^S \subset \LL^S_\QQ$ is the sublattice of vectors:
\[
\begin{pmatrix}
  l\\
  k_0\\
  k_1
\end{pmatrix}
\quad \text{such that $l$,~$k_0$,~$k_1 \in \ZZ$}
\]
and that the reduction function is:
\[
v^S\colon 
\begin{pmatrix}
  l\\
  k_0\\
  k_1\\
\end{pmatrix}
\mapsto
 \Bigl\langle \frac{k_1-l}{3}\Bigr\rangle
\]
Let $P \in H^2(\X;\QQ)$ denote the (non-equivariant) first Chern class
of $\cO_\X(1)$, and identify the Novikov ring $\Novikov$ with
$\CC[\![Q]\!]$ via the map that sends $d \in H_2(X;\ZZ)$ to $Q^{\int_d
  3P}$.  With notation as in \S\ref{sec:twisted_I_functions} we have
$D_1 = D_2 = D_3 = P$, $D_4 = D_5 = 3P$, and so the non-equivariant
limit of the $S$-extended $I$-function is:
\begin{multline*}
  I^S_\non(t,x,z) = 
  z e^{(t_1 + t_2 + t_3 + 3t_4 + 3t_5)P/z}  \\
  \times
  \sum_{(l,k_0,k_1) \in \NN^3}
  \frac{Q^l x_0^{k_0} x_1^{k_1} e^{(t_1+t_2 + t_3 + 3t_4 + 3t_5)l}}
  {z^{k_0+k_1} k_0! k_1!}
  \frac{
    \prod_{\begin{subarray}{l}
       \langle b\rangle = \<\frac{l-k_1}{3}\> \\ 
       \ b\leq 0 \end{subarray}}
    (P+bz)^3
  }{
    \prod_{
    \begin{subarray}{l}
       \langle b\rangle = \<\frac{l-k_1}{3}\> \\ 
       \ b\leq \frac{l-k_1}{3} 
    \end{subarray}}
    (P+bz)^3
  }
  \frac{
    \fun_{\bigl\langle \frac{k_1-l}{3}\bigr \rangle}
  }{
    \prod_{\substack{\langle b\rangle = 0 \\ 
        1 \leq b\leq l}}
    (3P+bz)^2
  }
\end{multline*}

Let $\cE \to \X$ be the line bundle corresponding to the element
$\varepsilon \in (\LL^S)^\vee$ given by:
\[
\varepsilon \colon
\begin{pmatrix}
  l \\
  k_0 \\
  k_1
\end{pmatrix}
\mapsto 2l
\]
so that $\cE = \cO(6)$.  The $S$-extended $(\be,\cE)$-twisted
$I$-function of $\X$ is:
\begin{multline*}
  I^S_{\be,\cE}(t,x,z) = 
  z e^{(t_1 + t_2 + t_3 + 3t_4 + 3t_5)P/z}  \\
  \times
  \sum_{(l,k_0,k_1) \in \NN^3}
  \frac{Q^l x_0^{k_0} x_1^{k_1} e^{(t_1+t_2 + t_3 + 3t_4 + 3t_5)l}}
  {z^{k_0+k_1} k_0! k_1!}
  \frac{
    \prod_{
    \begin{subarray}{l} 
    \langle b \rangle = \<\frac{l-k_1}{3}\> \\ 
    \ b \leq 0
    \end{subarray}
    }
    (P+bz)^3
  }{
    \prod_{
    \begin{subarray}{l}  
    \langle b\rangle = \< \frac{l-k_1}{3} \> \\ 
    \ b \leq \frac{l-k_1}{3}
    \end{subarray} 
    }
    (P+bz)^3
  }
  \frac{
    \prod_{\substack{\langle b\rangle = 0 \\ 
        1 \leq b\leq 2l}}
    (\kappa + 6P+bz)
  }{
    \prod_{\substack{\langle b\rangle = 0 \\ 
        1 \leq b\leq l}}
    (3P+bz)^2
  }
  \fun_{\<\frac{k_1-l}{3}\>}
\end{multline*}
This is homogeneous of degree $1$ if we set $\deg t_1 = \deg t_2 =
\deg t_3 = \deg t_4 = \deg t_5 = 0$, $\deg z = \deg Q = \deg x_0 =
\deg \kappa = 1$, and $\deg x_1 = 0$.  We therefore have:
\[
I^S_{\Y}(t,x,z) = z + (t_1+t_2+t_3+3t_4+3t_5)P +  x_0 \fun_0 +
f(x_1) \fun_{\frac{1}{3}} + O(z^{-1})
\]
where:
\[
f(x) = \sum_{m=0}^\infty (-1)^m \frac{x^{3m+1}}{(3m+1)!}
\frac{\Gamma(m + \frac{1}{3})^3}{\Gamma(\frac{1}{3})^3}
\]
Let $g$ denote the power series inverse to $f$, so that $g(x) = x +
\frac{x^4}{648} + \cdots$, and set:
\begin{align*}
  t_i = 
  \begin{cases}
    \tau & \text{if $i=1$} \\
    0 & \text{otherwise}
  \end{cases}
  &&
  x_i = 
  \begin{cases}
    \xi_0 & \text{if $i=0$} \\
    g(\xi_1) & \text{if $i=1$}
  \end{cases}
\end{align*}
Then:
\[
I^S_{\Y}(t,x,z) = 
z + \tau P + \xi_0 \fun_0 + \xi_1 \fun_{\frac{1}{3}} + O(z^{-1})
\]
and Corollary~\ref{cor:ci_mirrorthm} implies that:
\begin{multline*}
  J_\Y\big(
  \tau P + \xi_0 \fun_0 + \xi_1
  \fun_{\frac{1}{3}},z\big) =
  \\
  z e^{\tau P/z}  
  \sum_{(l,k_0,k_1) \in \NN^3}
  \frac{Q^l \xi_0^{k_0} g(\xi_1)^{k_1} e^{\tau l}}
  {z^{k_0+k_1} k_0! k_1!}
  \frac{
    \prod_{
    \begin{subarray}{l}
    \langle b \rangle = \<\frac{l-k_1}{3}\> \\ 
    \ b\leq 0
    \end{subarray}
    }
    (P+bz)^3
  }{
    \prod_{
    \begin{subarray}{l} 
    \langle b \rangle = \<\frac{l-k_1}{3}\> \\ 
    \ b\leq \frac{l-k_1}{3} 
    \end{subarray}
    }
    (P+bz)^3
  }
  \frac{
    \prod_{\substack{\langle b\rangle = 0 \\ 
        0 \leq b\leq 2l}}
    (6P+bz)
  }{
    \prod_{\substack{\langle b\rangle = 0 \\ 
        1 \leq b\leq l}}
    (3P+bz)^2
  }
 \fun_{\<\frac{k_1-l}{3}\>}
\end{multline*}
For example, the coefficient of 
$\fun_0$ in $J_\Y\big( \tau P + \xi_0 \fun_0 + \xi_1
\fun_{\frac{1}{3}},z\big)$ is:
\[
\sum_{l=0}^\infty
\sum_{k_0=0}^\infty
\sum_{\substack{k_1: 0 \leq k_1 \leq l \\
    k_1 \equiv l \bmod 3}}
\frac{Q^l \xi_0^{k_0} g(\xi_1)^{k_1} e^{\tau l}}
{z^{k_0+k_1-1} k_0! k_1!}
\frac{1}{\bigl(\frac{l-k_1}{3}\bigr)!}
\frac{(2l)!}{(l!)^2}
\]
This is the so-called \emph{quantum period} of $\Y$.

\bibliographystyle{plain}
\bibliography{bibliography}

\end{document}